\theoremstyle{remark}
\newtheorem{para}{\bf}[subsection]
\newtheorem{rem}[para]{\bf Remark}
\theoremstyle{definition}
\newtheorem{dfn}[para]{Definition}
\theoremstyle{plain}
\newtheorem{thm}[para]{Theorem}
\newtheorem{lem}[para]{Lemma}
\newtheorem{prop}[para]{Proposition}
\newenvironment{numequation}{\addtocounter{para}{1}
\begin{equation}}{\end{equation}}
\newcommand{\vep}{\varepsilon}
\newcommand{\vpi}{\varpi}
\newcommand{\bbF}{{\mathbb F}}
\newcommand{\bbG}{{\mathbb G}}
\newcommand{\bbH}{{\mathbb H}}
\newcommand{\bbP}{{\mathbb P}}
\newcommand{\bbT}{{\mathbb T}}
\newcommand{\frg}{{\mathfrak g}}
\newcommand{\frh}{{\mathfrak h}}
\newcommand{\fro}{{\mathfrak o}}
\newcommand{\frx}{{\mathfrak x}}
\newcommand{\fry}{{\mathfrak y}}
\newcommand{\frz}{{\mathfrak z}}
\newcommand{\frX}{{\mathfrak X}}
\newcommand{\cA}{{\mathcal A}}
\newcommand{\cO}{{\mathcal O}}
\newcommand{\cV}{{\mathcal V}}
\newcommand{\Z}{{\mathbb Z}}
\newcommand{\Qp}{{\mathbb Q_p}}
\newcommand{\Cp}{{\mathbb C_p}}
\newcommand{\Fpbar}{\overline{\bbF}_p}
\newcommand{\Fq}{{\mathbb F_q}}
\newcommand{\Fqbar}{\overline{\bbF}_q}
\newcommand{\fronr}{{\hat{\fro}^{nr}}}
\newcommand{\froDx}{\fro_D^*}
\newcommand{\Spec}{{\rm Spec}}
\newcommand{\Spf}{{\rm Spf}}
\newcommand{\lra}{\longrightarrow}
\newcommand{\midc}{{\hskip4pt | \hskip4pt}}
\newcommand{\sub}{\subset}
\begin{document}

%%
%% The title of the paper goes here.  Edit to your title.
%%

\title{On the Analyticity of the group action on the Lubin-Tate space}

%%
%% Now edit the following to give your name and address:
%%

%\author{Chi Yu Lo}
%\address{Department of Mathematics, Indiana University, Rawles Hall, 831 E 3rd St, Bloomington, IN 47408 }
%\email{loch@indiana.edu}
%\urladdr{http://mypage.iu.edu/~loch} % Delete if not wanted.

\author{Chi Yu Lo}
\address{Indiana University, Department of Mathematics, Rawles Hall, Bloomington, IN 47405, U.S.A.}
\email{loch@indiana.edu}

%%
%% If there is another author uncomment and edit the following.
%%

%%%
%%% The following is for the abstract.  The abstract is optional and
%%% if not used just delete, or comment out, the following.
%%%

\begin{abstract}
In this paper we study the analyticity of the group action of the automorphism group $G$ of a formal module $\bar{F}$ of height 2 (defined over $\Fqbar$) on the Lubin-Tate deformation space $X$ of $\bar{F}$. It is shown that a wide open congruence group of level zero attached to a non-split torus acts analytically on a particular disc in $X$ on which the period morphism is not injective. For certain other discs with larger radii (defined in terms of quasi-canonical liftings) we find wide open rigid analytic groups which act analytically on these discs.
\end{abstract}
%%
%%  LaTeX will not make the title for the paper unless told to do so.
%%  This is done by uncommenting the following.
%%

\maketitle

%%
%% LaTeX can automatically make a table of contents.  This is done by
%% uncommenting the following:
%%

\tableofcontents

%%
%%  To enter text is easy.  Just type it.  A blank line starts a new
%%  paragraph.
%%

%%
%% A new section is started as follows:
%%

%%%%%%%%%%%%%%%%%%%%%%%%%%%%%%%%%%%%%%%%%%%%%%%%%%%%%%%%%%%%%%%%%%%%%%
\section{Introduction}\label{intro}
%%%%%%%%%%%%%%%%%%%%%%%%%%%%%%%%%%%%%%%%%%%%%%%%%%%%%%%%%%%%%%%%%%%%%%

\setcounter{subsection}{1}

{\it The deformation space.} Let $K$ be a finite extension of $\mathbb{Q}_p$ with ring of integers $\fro = \fro_K$, uniformizer $\pi$, and residue field $\Fq$. Denote by $\bar{F}$ a formal $\fro_K$-module of $K$-height 2 over $\Fpbar$. It is well known that $G = {\rm Aut}_A(\bar{F})$ is isomorphic to the group of units $\froDx$ of the maximal compact subring $\fro_D$ of a quaternion division algebra $D$ with center $K$, cf. \cite[1.7]{Drinfeld74}. Therefore, $G$ carries the structure of a locally $K$-analytic group. The deformation space $\frX$ of $\bar{F}$ is (non-canonically) isomorphic to $\Spf(\fronr[[u]])$, where $\fronr$ is the competion of the maximal unramified extension of $\fro_K$and the group $G$ acts naturally on $\frX$ by automorphisms of this formal scheme. In particular, $G$ acts on the associated rigid-analytic space $X = \frX^{\rm rig}$ which we identify (using the chosen coordinate $u$) with the wide open unit disc $\{u \midc |u|<1\}$.

\vskip8pt

{\it Motivation: locally analytic representations.} The motivation for this paper comes from the theory of locally analytic representations of $p$-adic groups. Suppose $\cV$ is a $G$-equivariant vector bundle on $X$. The space of global sections $H^0(X,\cV)$ is then a nuclear Fr\'echet space, and its topological dual space $H^0(X,\cV)'_b$, equipped with the strong topology, is a compact inductive limit of Banach spaces. This space carries a $G$-action, and the question arises if this representation is locally analytic, and what other properties it may have. 

\vskip8pt

For instance, when $\cV = \cO_X$ is the structure sheaf, then the Gross-Hopkins period morphism

$$\Phi: X \lra (\bbP^1)^{\rm rig} \;, \;\; u \mapsto [\phi_0(u):\phi_1(u)] \;,$$

\vskip8pt

cf. \cite{GH}, can be used to show that $H^0(X,\cO_X)'_b$ is indeed a locally analytic representation, cf. section \ref{larger_radii}.\footnote{That $H^0(X,\cO_X)'_b$ is a locally analytic $G$-representation has been shown for more general deformation spaces $X$ of $p$-divisible formal groups and their automorphism groups $G$ by J. Kohlhaase, cf. \cite{KohlhaaseDef}.} That this action is locally analytic is in fact not very difficult to see in our given situation. However, in order to get a better understanding of $H^0(X,\cO_X)'_b$ as a locally analytic representation, we are interested in the subspaces of vectors which are {\it analytic} for certain wide open rigid-analytic groups $\bbG_s^\circ$. In doing so we are following the point of view on locally analytic representations developed by M. Emerton in \cite{Emerton}. We are now going to introduce the groups $\bbG_s^\circ$.

\vskip8pt

{\it The groups $\bbG_s^\circ$.} Let $K_2/K$ be the unramified quadratic extension, and write $\alpha \mapsto \bar{\alpha}$ be the non-trivial Galois automorphism of $K_2$ over $K$. Then we can represent $D$ as a $K$-subalgebra of $M_2(K_2)$ as follows:

\begin{numequation}\label{D} D = \left\{\left(\begin{array}{cc} \alpha & \pi\bar{\beta}  \\ \beta & \bar{\alpha} \end{array} \right) \; \Big| \;\; \alpha, \beta \in K_2 \; \right\} \;.
\end{numequation}

We let $\bbG$ be the algebraic group scheme over $\Spec(\fro_K)$ defined by $\fro_D^*$, i.e., for every unital commutative $\fro_K$-algebra $R$ one has

$$\bbG(R) = \Big(\fro_D \otimes_{\fro_K} R \Big)^* \;.$$

\vskip8pt

Let $\zeta \in K_2^*$ be such that $\bar{\zeta} = -\zeta$, so that $\zeta^2$ is in $\fro_K^*$. Let $a_1, a_2, b_1, b_2$ be indeterminates and put $\Delta =  a_1^2-\zeta^2a_2^2-\vpi(b_1^1-\zeta^2b_2^2)$. Then

$$\bbG = \Spec\left(\fro_K[a_1,a_2,b_1,b_2]\Big[\frac{1}{\Delta}\Big]\right) \;,$$

\vskip8pt

where the co-multiplication is given by

$$\begin{array}{rcl}
a_1 & \mapsto & a_1 a_1' + \zeta^2a_2 a_2' + \vpi b_1b_1' - \zeta^2 \vpi b_2 b_2' \;,\\
&&\\
a_2 & \mapsto & a_2 a_1' + a_1 a_2' + \vpi b_1 b_2' - \vpi b_2 b_1' \;,\\
&&\\
b_1 & \mapsto & a_1 b_1' - \zeta^2 a_2 b_2' + b_1 a_1' + \zeta^2 b_2 a_2' \;,\\
&&\\
b_2 & \mapsto & a_1 b_2' - a_2 b_1' + b_1 a_2' + b_2 a_1' \;.\\
\end{array}$$

\vskip8pt

Let $\bbG_K$ be the base change from $\fro_K$ to $K$, and let $\bbG_K^{\rm rig}$ be the associated rigid-analytic group. Its group of $K$-valued points is equal to $D^*$. For an integer $s \ge 0$ there is a ``wide open'' rigid analytic group $\bbG_s^\circ \sub \bbG_K^{\rm rig}$ whose group of $\Cp$-valued points is given by 

$$\left\{(a_1,a_2,b_1,b_2) \in \bbG_K^{\rm rig}(\Cp) \midc |a_1 -1| < |\pi|^s \,, \,|a_2| < |\pi|^s \,, \, |b_1| < |\pi|^{s-\frac{1}{q+1}} \,, \, |b_2| < |\pi|^{s-\frac{1}{q+1}} \, \right\} \;.$$

\vskip8pt

{\it Critical radii and critical discs.} In \cite{GH} the homogeneous coordinates on $\bbP^1$ are chosen in such a way that the moduli of quasi-canonical lifts which carry an action of an open subgroup of $\fro_{K_2}^*$ are mapped to the points $[1:0]$ and $[0:1]$, which are the fixed points of the non-split torus 

$$\fro_{K_2}^* \; \simeq \; \left\{\left(\begin{array}{cc} \alpha & 0 \\ 0 & \bar{\alpha} \end{array} \right) \; \Big| \;\; \alpha \in \fro_{K_2}^* \; \right\} \; \sub \; \froDx \;.$$

\vskip8pt

With respect to the coordinate $u$ used in \cite{GH}, the absolute values of the moduli of these quasi-canonical lifts (which carry an action of an open subgroup of $\fro_{K_2}^*$) are given by $|u|=0$ and 

$$|u| = |\pi|^{\frac{1}{(q+1)q^s}} \;, \;\; s = 0, 1, \ldots \;.$$

\vskip8pt

For $s \in \Z_{\ge 0}$ we call $r_s = |\pi|^{\frac{1}{(q+1)q^s}}$ a {\it critical radius} and consider the affinoid subdomain

$$\Delta_s = \Big\{u \in X \midc |u| \le |\pi|^{\frac{1}{(q+1)q^s}} \; \Big\} \sub X \;,$$

\vskip8pt

which we call a {\it critical disc}. It is easy to see that the action of $G = \froDx$ on $X$ stabilizes any of the discs $\Delta_s$. Our investigations seem to indicate that the action of $G$ on $\Delta_s$ extends to a rigid-analytic action of $\bbG_s^\circ$ on $\Delta_s$. While, at the moment, we fall short of proving this, we have obtained some partial results in this direction.

\vskip8pt

{\it The results of this paper.} Let 

$$\bbT = \Spec\left(\fro[a_1,a_2]\left[\frac{1}{a_1^2-\zeta^2a_2^2}\right]\right) \sub \bbG$$

\vskip8pt

be the subgroup scheme which corresponds to the unramified torus $\fro_{K_2}^* \sub \froDx$. In section 2, we will show that the action of $\fro_{K_2}^*$ on $\Delta_0$ extends to an analytic action of the rigid-analytic subgroup 

\begin{numequation}\label{non_split_torus} \begin{array}{rcl} \bbT_0^\circ & = & \left\{(a_1,a_2,b_1,b_2) \in \bbG_K^{\rm rig} \midc |a_1 -1| < 1 \,, \,|a_2| < 1 \,, \, b_1 = b_2 = 0 \, \right\} \\
&&\\
& = & (\bbT_K)^{\rm rig} \cap \bbG_0^\circ 
\end{array}
\end{numequation}

\vskip8pt

on $\Delta_0$, cf. theorem \ref{main1} (2). We prove this by explicitly analyzing the group action of $\fro_{K_2}^*$. For $g=\left(
\begin{array}{cc}
	\alpha &  0\\
	0 & \bar{\alpha} \\
\end{array}
\right)$, with $\alpha \in \fro_{K_2}^*$, we write

$$g.u = \sum_{n=0}^\infty a_n(g)u^n \;.$$

\vskip8pt

In section \ref{power_series} we show that each function $a_n(g)$ is a polynomial in $E = \frac{\bar{\alpha}}{\alpha}$, and that $a_n$ vanishes identically if $n$ is not of the form $1+k(q+1)$ for $k \in \Z_{\ge 0}$. Put $b_k(E) = a_{1+k(q+1)}(E)$. Then, in section \ref{rational_functions}, we show that $b_k(E) = \frac{1}{\pi^k}EQ_k(E^{q+1})$, where $Q_k(x)$ is a polynomial with coefficients in $\fro_K$, and $\deg(Q_k) \le k$. The key problem is then to estimate $|Q_k(x)|$ when $|x-1| \le r$ for $r < 1$.  This requires some fairly delicate arguments which are quite elaborate. 

\vskip8pt

In section \ref{larger_radii} we analyze the group action via the derived action of its Lie algebra. In this section we assume eventually that $K = \Qp$. For every disc $\Delta_s$ we show that a certain rigid-analytic subgroup $\bbH_s^\circ$ of $\bbG_K^{\rm rig}$ acts analytically on $\Delta_s$, cf. theorem \ref{main2} for details. However, $\bbH_s^\circ$ is always strictly contained in the analytic group $\bbG_s^\circ$ defined above.

%%%%%%%%%%%%%%%%%%%%%%%%%%%%%%%%%%%%%%%%%%%%%%%%%%%%%%%%%%%%%%%%%%%%%%%%
\section{Analyticity of the non-split torus on the first critical disc}
%%%%%%%%%%%%%%%%%%%%%%%%%%%%%%%%%%%%%%%%%%%%%%%%%%%%%%%%%%%%%%%%%%%%%%%5

\subsection{The power series describing the group action}\label{power_series}

\begin{para}
According to \cite[\S 25]{GH}, the period map $\Phi(u) = [\phi_0(u):\phi_1(u)]$ from the deformation space $X=\{u:|u|<1\}$ to the rigid-analytic projective space $(\mathbb{P}^1)^{\rm rig}$ can be described by power series 

$$\phi_0(u)=\sum_{n=0}^\infty c_n u^n \;, \hskip16pt \phi_1(u) = \sum_{n=1}^\infty d_n u^n \;,$$ 

\vskip8pt

whose coefficients are given as follows

\[c_n=\left\{
\begin{array}{lcl}
	1 & & \textrm{if } n=0 \\
	\\
	\pi^{\frac{-k-1}{2}} & & \textrm{if } n \textrm{ is of the form } q^{2a_0}+q^{2a_1+1}+\dots+q^{2a_k+k}\\
	& & \textrm{ with } 0\leq a_0\leq \dots \leq a_k \textrm{ for some odd k} \\
	\\
	0 & & \textrm{otherwise}\\
\end{array}
\right.\]

\vskip8pt

and

\[d_n=\left\{
\begin{array}{lcl}
	\pi^{\frac{-k}{2}} & & \textrm{if } n \textrm{ is of the form } q^{2a_0}+q^{2a_1+1}+\dots+q^{2a_k+k}\\
	& & \textrm{ with } 0\leq a_0\leq \dots \leq a_k \textrm{ for some even k} \\
	\\
	0 & & \textrm{otherwise}\\
\end{array}
\right. \;.\]

\vskip8pt

In particular, $c_n=0$ if $q+1 \nmid n$ and $d_n=0$ if $q+1 \nmid n-1$.

\vskip8pt

The group $G$ acts on $\bbP^1$ by linear transformations. If $g=\left(
\begin{array}{cc}
	\alpha &  \pi\bar{\beta}\\
	\beta & \bar{\alpha} \\
\end{array}
\right)\in G$ and $[x_0:x_1] \in \bbP^1$, then 

\begin{numequation}\label{action} g\cdot [x_0:x_1]=[\alpha x_0+\beta x_1 : \pi\bar{\beta} x_0+\bar{\alpha} x_1] \;,
\end{numequation}

\vskip8pt

cf. \cite[25.13]{GH}. In this section we will only be interested in the action of the subgroup

$$\left\{\left(\begin{array}{cc} \alpha & 0 \\ 0 & \bar{\alpha} \end{array} \right) \; \Big| \;\; \alpha \in \fro_{K_2}^* \; \right\} \; \sub \; \froDx \;,$$

\vskip8pt

cf. \ref{D}. Let $g=\left(
\begin{array}{cc}
	\alpha & 0 \\
	0 & \bar{\alpha} \\
\end{array}
\right)$ be an element of this subgroup, with $\alpha\in \fro_{K_2}^*$. Then the action of $g$ on $X$ is given by a power series

\[g\cdot u=\sum_{n=0}^\infty a_n(g) u^n\; .\]
\vskip8pt

In order to understand how $a_n(g)$ depends on $g$ we make use of the period map $\Phi$. Since this map is $G$-equivariant, we have

\[ [\phi_0(g\cdot u),\phi_1(g\cdot u)]=g\cdot[\phi_0(u),\phi_1(u)]=[\alpha\phi_0(u),\bar{\alpha}\phi_1(u)] \; . \]
\vskip8pt

And hence

\[ E\phi_1(u)\phi_0(g\cdot u)= \phi_0(u)\phi_1(g\cdot u)   \]
\vskip8pt

where $E:=\frac{\bar{\alpha}}{\alpha}$. By comparing the coefficients of $u^n$ in the above equation, we get

\begin{numequation}
\label{eq1}
	E\sum_{l\leq n} d_l c_m \sum_{\substack{\sum r_k=m\\ \sum k r_k=n-l}} \prod_k a_k^{r_k}=\sum_{m\leq n} c_m d_l \sum_{\substack{\sum r_k=l \\ \sum k r_k=n-m}}\prod_k a_k^{r_k}  \;.
\end{numequation}

\vskip8pt

By induction, we can see that the function $a_n$ is actually a function of $E$. So instead of writing $a_n(g)$, we write $a_n(E)$ from now on. When $n=0$, equation \ref{eq1} becomes

\[ 0= \phi_1(a_0)\;.\]
\vskip8pt
Hence $a_0(E)=0$, since $|a_0(E)|\leq |\pi|$ and $\phi_1$ is injective on $\{u:|u|\leq |\pi|^{\frac{1}{q+1}}\}$.\vskip8pt

When $n=1$, the equation \ref{eq1} becomes

\[ Ed_1c_0= c_0 d_1 a_1,\]

\vskip8pt

hence $a_1(E)=E$. For $2\leq n \leq q$, we have

\[ 0=c_0d_1 a_n \;,\]
\vskip8pt

and thus $a_n(E)=0$.

\begin{lem}
If $q+1  \nmid \,\, n-1$, then $a_n(E)=0$.
\end{lem}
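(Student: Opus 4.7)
The plan is to argue by strong induction on $n$. The base cases $n = 0$ and $2 \leq n \leq q$, all of which satisfy the hypothesis $n \not\equiv 1 \pmod{q+1}$, have already been treated in the discussion above. For the inductive step I fix $n \geq q+1$ with $n \not\equiv 1 \pmod{q+1}$ and assume $a_m(E) = 0$ for every $m < n$ with $m \not\equiv 1 \pmod{q+1}$.

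The key step is to read the coefficient identity \ref{eq1} modulo $q+1$. Recall that $c_m \neq 0$ forces $m \equiv 0 \pmod{q+1}$, while $d_l \neq 0$ forces $l \equiv 1 \pmod{q+1}$; in particular $d_0 = 0$. I examine any nonzero term $d_l c_m \prod_k a_k^{r_k}$ contributing to the left-hand side of \ref{eq1}, subject to $\sum_k r_k = m$ and $\sum_k k r_k = n - l$, and split into two subcases. If $r_n = 0$, then every $k$ with $r_k > 0$ lies in the range $k < n$, and by the inductive hypothesis such a $k$ must satisfy $k \equiv 1 \pmod{q+1}$; hence
\[
n - l \;\equiv\; \sum_k k r_k \;\equiv\; \sum_k r_k \;=\; m \;\equiv\; 0 \pmod{q+1},
\]
and combined with $l \equiv 1$ this forces $n \equiv 1 \pmod{q+1}$, contradicting our assumption. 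If instead $r_n \geq 1$, then $n r_n \leq \sum_k k r_k = n - l$ forces $l = 0$, whence $d_l = 0$ and the term drops out. Consequently, the entire left-hand side of \ref{eq1} vanishes.

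An entirely analogous analysis of the right-hand side terms $c_m d_l \prod_k a_k^{r_k}$, now constrained by $\sum_k r_k = l$ and $\sum_k k r_k = n - m$, again rules out the subcase $r_n = 0$ by the same congruence. The subcase $r_n \geq 1$ now survives, but only when $m = 0$, $r_n = 1$ and $r_k = 0$ for all $k \neq n$, which in turn forces $l = 1$; this unique surviving contribution is $c_0 d_1 a_n = a_n$. Substituting back into \ref{eq1} therefore reduces the identity to $0 = a_n(E)$, completing the induction. I do not foresee any real obstacle: the whole argument hinges on the congruence analysis modulo $q+1$ together with two simple facts, namely that $d_0 = 0$ and that $n r_n \leq n - l$ (resp.\ $\leq n - m$) collapses $l$ (resp.\ $m$) to zero as soon as $r_n \geq 1$.
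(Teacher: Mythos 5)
Your proof is correct and follows essentially the same route as the paper: both arguments run the induction through the coefficient identity \ref{eq1} and use the congruences $m\equiv 0$, $l\equiv 1 \pmod{q+1}$ (forced by $c_m\neq 0$, $d_l\neq 0$) to kill every term except $c_0d_1a_n$ on the right-hand side. Your treatment of the $r_n\ge 1$ subcase (collapsing $l$ to $0$ and invoking $d_0=0$) is just a more explicit handling of a point the paper leaves implicit.
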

\begin{proof}
We will prove by induction. Suppose $a_n(E)=0$ for $n\leq N-1$ and $q+1 \nmid n-1$. Want to check the case $n=N$.\vskip8pt

If $q+1 |N-1$, then we have nothing to show. Now we assume $q+1 \nmid N-1$. By the induction hypotheses, left hand side of equation \ref{eq1} becomes

\[E\sum_{l\leq N} d_l c_m \sum_{\substack{\sum r_{1+k(q+1)}=m \\ \sum (1+k(q+1)) r_{1+k(q+1)}=N-l}}\prod_k a_{1+k(q+1)}^{r_{1+k(q+1)}}
=  E\sum_{l\leq N} d_l c_m \sum_{\substack{\sum r_{1+k(q+1)}=m \\  \sum k\, r_{1+k(q+1)}=\frac{N-m-l}{q+1} }} \prod_k a_{1+k(q+1)}^{r_{1+k(q+1)}}
\; .\]

\vskip8pt

Since $q+1 | m+l-1$ if $c_m d_l$ does not vanish, left hand side become zero as $\frac{N-m-l}{q+1}$ is not an integer. Similarly, the right hand side becomes

\[
c_0d_1a_{N}\; .\]
\vskip8pt

Hence $a_{N}(E)=0$ follows. \end{proof}

\vskip8pt

From now on, we can rewrite

\[g\cdot u=\sum b_{k}(E)u^{1+k(q+1)}\]
\vskip8pt

 with $b_n(E):=a_{1+n(q+1)}(E)$ and rewrite equation \ref{eq1} as

\[	E\sum_{l\leq n} d_l c_m \sum_{\substack{\sum r_k=m\\ \sum k r_k=\frac{n-m-l}{q+1}}}\prod_k b_{k}^{r_k}=\sum_{m\leq n} c_m d_l \sum_{\substack{\sum r_k=l\\ \sum k r_k=\frac{n-m-l}{q+1}}}\prod_k b_{k}^{r_k}  \]
\vskip8pt

or

\begin{numequation}
\label{eq2}
b_n=E\sum_{l\leq n} d_l c_m \sum_{\substack{\sum r_k=m\\ \sum k r_k=\frac{n-m-l}{q+1}}}\prod_k b_{k}^{r_k}-\sum_{\substack{m\leq n\\ m+l>1}} c_m d_l \sum_{\substack{\sum r_k=l\\ \sum k r_k=\frac{n-m-l}{q+1}}}\prod_k b_{k}^{r_k}
\end{numequation}
\vskip8pt

Let us consider the first few terms when $n \leq 4$:
%\[\begin{aligned}
%b_2 & =-\pi^{-1}b_1+\pi^{-1}E\binom{q+1}{q} b_0^{q}b_1\\
% &=\pi^{-2}E\left(E^{q+1}-1\right)\left((q+1)E^{q+1}-1\right)\\
% & =\pi^{-2}E\left(E^{q+1}-1\right)^2+q\pi^{-2}E^{q+2}\left(E^{q+1}-1\right)\\
% &=\pi^{-2}(q+1)(E^{q+1}-1)^2+\pi^{-2}q(E^{q+1}-1)\\
%\end{aligned}\]

%\[\begin{aligned}
%b_3 & =-\pi^{-1}b_2+\pi^{-1}E\binom{q+1}{1} b_0^{q-1}b_2+\pi^{-1}E\binom{q+1}{2} b_0^{q}b_1^2\\
% &=\pi^{-3}E\left(E^{q+1}-1\right)\left(\frac{(q+1)(3q+2)}{2}E^{2q+2}-\frac{(q+1)(q+4)}{2}E^{q+1} +1 \right)\\
% & =\pi^{-3}E\left(E^{q+1}-1\right)^3+ \pi^{-3}\left(\frac{q(q+5)}{2}\right)E^{q+2}\left(E^{q+1}-1\right)^2 +q^2\pi^{-3}E^{2q+3}\left(E^{q+1}-1\right)  \\
% & =\pi^{-3}\frac{(q+1)(3q+2)}{2}E(E^{q+1}-1)^3+\pi^{-3}\frac{5q(q+1)}{2}E(E^{q+1}-1)^2+\pi^{-3}q^2E(E^{q+1}-1)\\
%\end{aligned}\]

%\[\begin{aligned}
%b_4 & = \pi^{-4}E\left(E^{q+1}-1\right)\left( \frac{(q+1)(8q^2+10q+3)}{3}E^{3q+3}-\frac{(q+1)(11q^2+34q+18)}{6}E^{2q+2}   \right.\\
% & \;\;\;\;\left. +\frac{(q+1)(q^2+8q+18)}{6} E^{q+1}+1       \right)\\
% & = \pi^{-4}\frac{(q+1)(2q+1)(4q+3)}{3}E(E^{q+1}-1)^4 +\pi^{-4}\frac{q(q+1)(37q+26)}{6}E(E^{q+1}-1)^3 \\
% & \;\;\;\; +\pi^{-4}\frac{9q^2(q+1)}{2}E(E^{q+1}-1)^2+\pi^{-4}q^3E(E^{q+1}-1)\\
%\end{aligned}\]

\[\begin{aligned}
b_1(E) & =\frac{1}{\pi}E\binom{q+1}{0} b_0^{q+1}-\frac{1}{\pi}b_0 = \frac{1}{\pi}E\left(E^{q+1}-1\right) \;,\\
\\
b_2(E) & =\frac{1}{\pi}E\binom{q+1}{1} b_0^{q}b_1-\frac{1}{\pi}b_1 = \frac{1}{\pi^2}(q+1)(E^{q+1}-1)^2+\frac{1}{\pi^2}q(E^{q+1}-1) \;,\\
\\
b_3(E) & =\frac{1}{\pi^3}\frac{(q+1)(3q+2)}{2}E(E^{q+1}-1)^3+\frac{1}{\pi^3}\frac{5q(q+1)}{2}E(E^{q+1}-1)^2 \\
 &\hskip300pt +\frac{1}{\pi^3}q^2E(E^{q+1}-1) \;, \\
\\
b_4(E) & = \frac{1}{\pi^4}\frac{(q+1)(2q+1)(4q+3)}{3}E(E^{q+1}-1)^4 +\frac{1}{\pi^4}\frac{q(q+1)(37q+26)}{6}E(E^{q+1}-1)^3 \\
 & \hskip150pt +\frac{1}{\pi^4}\frac{9q^2(q+1)}{2}E(E^{q+1}-1)^2+\frac{1}{\pi^4}q^3E(E^{q+1}-1) \;.\\
\end{aligned}\]

\vskip12pt

We remark that J. Kohlhaase has computed the functions $b_n$ for $n=0,1,2$, cf. \cite[Thm. 1.19]{Koh} (what is denoted by $\alpha_1$ in loc. cit. coincides with what is here denoted by $E$).\vskip8pt
\end{para}

\vskip12pt

\subsection{The coefficients as rational functions on the torus}\label{rational_functions}

Here we will present some results about the terms $b_k$ or $a_{1+k(q+1)}$ as functions of $E=\frac{\bar{\alpha}}{\alpha}$.\vskip8pt

\begin{lem}
\begin{enumerate}
	\item $b_{k}(E)$ is of the form $\pi^{-k}E Q_k(E^{q+1})$ where $Q_k\in \fro_K[x]$\;.
	\vskip8pt
 	\item With $Q_k$ as in (1) we have $\deg_x(Q_k) \leq k$\;.
	\vskip8pt
	\item With $Q_k$ as in (1) we have $Q_k(0)\equiv (-1)^{k} \mod \pi$\;.
	\vskip8pt
	\item $||b_{k}||=|\pi^{-k}|$ where the supremum norm is taken over $|E|\leq 1$\;.
\end{enumerate}

\end{lem}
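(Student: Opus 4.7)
The plan is to prove (1)--(4) simultaneously by strong induction on $k$, using the recurrence (\ref{eq2}) to express $b_k$ in terms of the lower $b_j$'s. The base case $k=0$ is immediate: we computed $b_0(E)=E$, so $Q_0=1$, and all four assertions hold trivially. For the inductive step, assume the lemma holds for all $j<k$. Substituting $b_j=\pi^{-j}E\,Q_j(E^{q+1})$ into (\ref{eq2}), each left-hand summand takes the shape
\[
d_l c_m\cdot\binom{m}{r_\bullet}\cdot\pi^{-(k-t)}E^{m+1}\prod_j Q_j(E^{q+1})^{r_j},\qquad t:=\frac{m+l-1}{q+1},
\]
and each right-hand summand has the same shape with $E^l$ in place of $E^{m+1}$.

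For (1) and (2) I argue as follows. Since $(q+1)\mid m$ whenever $c_m\neq 0$ and $l\equiv 1\pmod{q+1}$ whenever $d_l\neq 0$, both exponents $m+1$ and $l$ are $\equiv 1\pmod{q+1}$, so every summand is $E$ times a polynomial in $E^{q+1}$. The $E$-degree of a left-hand summand is at most $(q+1)k+2-l\leq 1+k(q+1)$ (using $l\geq 1$) and similarly $(q+1)k+1-m\leq 1+k(q+1)$ on the right, giving $\deg_xQ_k\leq k$. The key integrality condition is $v(c_m)+v(d_l)\geq -t$, which ensures $\pi^k b_k\in\fro_K[E]$; after writing $m$ and $l$ in the explicit normal forms from the $c_m$- and $d_l$-definitions with complexity parameters $k_1$ (odd) and $k_2$ (even), this reduces to the elementary inequality
\[
q^{k_1+1}+q^{k_2+1}\;\geq\;q+1+\tfrac{(q^2-1)(k_1+k_2+1)}{2},
\]
which I would verify by induction on $k_1+k_2$: equality holds at $(k_1,k_2)=(1,0)$, and incrementing either index by $2$ grows the left-hand side by a factor of $q^{k_\bullet+1}\geq 4$ larger than the right-hand increment of $q^2-1$.

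For (3) I extract the coefficient of $E^1$ on both sides of (\ref{eq2}): since each $\prod Q_j(E^{q+1})^{r_j}$ lies in $\fro_K[E^{q+1}]$, only the summands with $m=0$ (on the left) and $l=1$ (on the right) contribute, producing
\[
Q_k(0)\;=\;\pi^k d_{1+k(q+1)}\;-\;\sum_{i=1}^{k}\pi^i c_{i(q+1)}\,Q_{k-i}(0).
\]
Because $c_{q+1}=\pi^{-1}$ exactly, the $i=1$ summand equals $-Q_{k-1}(0)$. For $i\geq 2$, any representation of $i(q+1)$ of $c_m$-type must have $k_1\leq 2i-3$, since $k_1\geq 2i-1$ would force $i(q+1)\geq(q^{2i}-1)/(q-1)$, which fails for $i\geq 2,q\geq 2$; hence $v(c_{i(q+1)})\geq -(i-1)$ and $v(\pi^i c_{i(q+1)})\geq 1$. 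Analogously, the bound $(q^{2k+1}-1)/(q-1)>1+k(q+1)$ for $k\geq 1$ forces $v(d_{1+k(q+1)})\geq 1-k$, so $v(\pi^k d_{1+k(q+1)})\geq 1$. Thus modulo $\pi$ only the $i=1$ term survives, giving $Q_k(0)\equiv -Q_{k-1}(0)\equiv(-1)^k\pmod{\pi}$ by induction.

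Finally, (4) is formal: parts (1)--(3) show $\pi^k b_k=E\,Q_k(E^{q+1})\in\fro_K[E]$ has $E^1$-coefficient $Q_k(0)$, a unit by (3); hence the Gauss norm of $\pi^k b_k$ on $|E|\leq 1$ (which equals the supremum norm) is exactly $1$, yielding $\|b_k\|=|\pi|^{-k}$. The hardest step is the modular analysis in (3): identifying precisely which $\pi^i c_{i(q+1)}$ and $\pi^k d_{1+k(q+1)}$ vanish mod $\pi$ rests on the two combinatorial inequalities about normal-form representations of $i(q+1)$ and $1+k(q+1)$ sketched above.
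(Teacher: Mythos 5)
Your proof is correct and follows essentially the same route as the paper: a simultaneous induction on $k$ via the recurrence (\ref{eq2}), with (1)--(2) coming from the congruences $m\equiv 0$, $l\equiv 1 \pmod{q+1}$ together with the valuation bound $\nu(c_md_l)\ge -\tfrac{m+l-1}{q+1}$, part (3) by isolating the coefficient of $E^1$ (the paper's ``put $E=0$'' after dividing by $E$) so that only the $c_{q+1}d_1\,b_{k-1}$ term survives modulo $\pi$, and (4) as a formal consequence of (1) and (3). Your write-up is in fact more explicit than the paper's about the two combinatorial valuation inequalities (which the paper subsumes in the unproved assertion that $\tfrac{m+l-1}{q+1}+\nu(c_md_l)>0$ for $m+l>q+2$), but the underlying argument is the same.
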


\begin{proof}
Part(4) follows from part (1) and part (3).

\vskip8pt

We are now going to prove (1), (2) and (3) at the same time by induction on $k$. It is clear that $b_0$ and $b_1$ satisfy the statements.

\vskip8pt

Suppose the statements are true for $k\leq N-1$, where $N\geq 2$. Then equation \ref{eq2} can be rewrite as
\begin{numequation}
\label{eq3}
b_{N}=C_1+C_2+C_3+C_4\;,
\end{numequation}
where

\begin{align*}
	C_1 & = -\pi^{-1}b_{N-1} \;\;, \\
	C_2& =  E d_1 c_{q+1}
	\sum_{\substack{|\underline{r}|=q+1\\
	r_k\geq q,\,\, \exists k\geq 0\\
	\sum k r_k=N-1\\
	} } \genfrac{(}{)}{0pt}{}{q+1}{\underline{r}}\underline{b}^{\underline{r}} \;\;,\\
	C_3 & = E d_1 c_{q+1}
	\sum_{\substack{|\underline{r}|=q+1\\
	r_k\leq q-1,\,\, \forall k\geq 0\\
	\sum k r_k=N-1\\
	} } \genfrac{(}{)}{0pt}{}{q+1}{\underline{r}}\underline{b}^{\underline{r}} \;\;,\\
	C_4 & = E\sum_{q+2< m+l} d_m c_{l}
	\sum_{\substack{|\underline{r}|=m\\
	\sum k r_k=N-\frac{m+l-1}{q+1}\\
	} } \binom{m}{\underline{r}}\underline{b}^{\underline{r}} -  \sum_{q+2< m+l} d_m c_{l}
	\sum_{\substack{|\underline{r}|=l\\
	\sum k r_k=N-\frac{m+l-1}{q+1}\\
	} } \binom{l}{\underline{r}}\underline{b}^{\underline{r}} \;\;.\\
\end{align*}

In the above expression, $\underline{r}$ denotes the multi-index $(r_0,r_1,r_2,...)$ and $|\underline{r}|$ denotes $\sum r_k$. If $n=|\underline{r}|$, $\binom{n}{\underline{r}}$ denotes $\frac{n!}{r_0!r_1!r_2!...}$. Finally$, \underline{b}^{\underline{r}}$ denotes $\prod b_{k}^{r_k}$.\vskip8pt

Part (1) and part (2) follows from directly from the induction hypothesis.

\vskip8pt

To prove part(3), multiply $b_{N}$ by $\pi^N$ and modulo $\pi$. In particular, $ \pi^N C_3\equiv \pi^N C_4\equiv  0$ as $\frac{m+l-1}{q+1}+\nu(c_md_l)>0$ if $m+l>q+2$. Hence $\pi^N \frac{b_{N}}{E}\equiv -\pi^{N-1}\frac{b_{N-1}}{E}+\pi^N\frac{C_2}{E} \mod \pi$. Put $E=0$ and the result follows. \end{proof}

The goal of this section is to find estimates for $b_n$ when $|E-1|< 1$. To obtain these estimates we need to describe $b_n$ or $Q_n$ more precisely. We will use the recursive formula $\ref{eq2}$ to define polynomials $b_{n,k}(E)$ which are of the form  $\pi^{-k} EQ_{n,k}(E^{q+1})$ with $Q_{n,k}(x) \in \fro_K[x]$ for $0 \leq k\leq n$ such that $b_n = \sum b_{n,k}$ and with good control on the order of $(x-1)$ in $Q_{n,k}$. In particular, $\pi^{-n}Q_{n}=\sum \pi^{-k}Q_{n,k}$ and $||b_{n,k}|| \leq |\pi|^{-k}$.

\vskip8pt

First of all, $b_{0,0}(E):=b_{0}(E)=E$. Suppose we have already defined $b_{n,k}$ for $0\leq k\leq n<N$. Then $\ref{eq3}$ suggests the following definition for $s<N$:

%\begin{numequation}
\begin{align}\label{eq4}
b_{N,s}= & \sum_{\substack{m+l>1\\ m<l}}c_m d_l \left(E-E^{q^{\left\lfloor \log_q(l)\right\rfloor}}\right)\sum_{\substack{|\underline{r,i}|=m \\ \sum_{k,i} kr_{k,i}=N-\frac{m+l-1}{q+1}\\ \sum_{k,i}ir_{k,i}=s+\nu(c_md_l) }}\binom{m}{\underline{r,i}}\underline{b,i}^{\underline{r,i}}\\
 & +\sum_{\substack{m+l>1\\ m<l}}c_m d_l E^{q^{\left\lfloor \log_q(l)\right\rfloor}}\sum_{\substack{|\underline{r,i}|=m \\ \sum_{k,i} kr_{k,i}=N-\frac{m+l-1}{q+1}\\ \sum_{k,i}ir_{k,i}=s+\nu(c_md_l)+1  }}\left(\binom{m}{\underline{r,i}}-\binom{m+q^{\left\lfloor \log_q(l)\right\rfloor}}{\underline{r,i}+q^{\left\lfloor \log_q(l)\right\rfloor}}\right)\underline{b,i}^{\underline{r,i}}\notag\\
 & - \sum_{\substack{m+l>1\\ m<l}}c_m d_l  \sum_{\substack{|\underline{r,i}|=l,\,\, r_{0,0}<q^{\left\lfloor \log_q(l)\right\rfloor},\,\, p\nmid\binom{l}{\underline{r,i}}  \\ \sum_{k,i} kr_{k,i}=N-\frac{m+l-1}{q+1}\\ \sum_{k,i}ir_{k,i}=s+\nu(c_md_l) }} \binom{l}{\underline{r,i}} \underline{b,i}^{\underline{r,i}}\notag\\
 &-\sum_{\substack{m+l>1\\ m<l}}c_m d_l  \sum_{\substack{|\underline{r,i}|=l,\,\, r_{0,0}<q^{\left\lfloor \log_q(l)\right\rfloor},\,\, p|\binom{l}{\underline{r,i}}  \\ \sum_{k,i} kr_{k,i}=N-\frac{m+l-1}{q+1}\\ \sum_{k,i}ir_{k,i}=s+\nu(c_md_l)+1 }} \binom{l}{\underline{r,i}} \underline{b,i}^{\underline{r,i}}\notag\\
 & + \sum_{\substack{m+l>1\\ l<m}}c_m d_l \left(E^{1+q^{\left\lfloor \log_q(m)\right\rfloor}}-1\right)\sum_{\substack{|\underline{r,i}|=l  \\   \sum_{k,i} kr_{k,i}=N-\frac{m+l-1}{q+1}\\ \sum_{k,i}ir_{k,i}=s+\nu(c_md_l)  }}\binom{l}{\underline{r,i}}\underline{b,i}^{\underline{r,i}}\notag\\
 & +\sum_{\substack{m+l>1\\ l<m}}c_m d_l E^{1+q^{\left\lfloor \log_q(m)\right\rfloor}}\sum_{\substack{|\underline{r,i}|=l   \\ \sum_{k,i} kr_{k,i}=N-\frac{m+l-1}{q+1}\\ \sum_{k,i}ir_{k,i}=s+\nu(c_md_l)+1\\  }}\left(\binom{l+q^{\left\lfloor \log_q(m)\right\rfloor}}{\underline{r,i}+q^{\left\lfloor \log_q(m)\right\rfloor}}-\binom{l}{\underline{r,i}}\right)\underline{b,i}^{\underline{r,i}} \notag\\
 & +E\sum_{\substack{m+l>1\\ l<m}}c_m d_l  \sum_{\substack{|\underline{r,i}|=m,\,\, r_{0,0}<q^{\left\lfloor \log_q(m)\right\rfloor},\,\, p\nmid\binom{m}{\underline{r,i}}  \\ \sum_{k,i} kr_{k,i}=N-\frac{m+l-1}{q+1}\\ \sum_{k,i}ir_{k,i}=s+\nu(c_md_l) }} \binom{m}{\underline{r,i}} \underline{b,i}^{\underline{r,i}}\notag\\
& +E\sum_{\substack{m+l>1\\ l<m}}c_m d_l  \sum_{\substack{|\underline{r,i}|=m,\,\, r_{0,0}<q^{\left\lfloor \log_q(m)\right\rfloor},\,\, p|\binom{m}{\underline{r,i}}  \\ \sum_{k,i} kr_{k,i}=N-\frac{m+l-1}{q+1}\\ \sum_{k,i}ir_{k,i}=s+\nu(c_md_l)+1 }} \binom{m}{\underline{r,i}} \underline{b,i}^{\underline{r,i}}\;.\notag
\end{align}
%\end{numequation}

And $b_{N,0}:=b_N-\sum_{s>0}b_{N,s}$.\vskip8pt

In the above expression, $\underline{r,i}$ denote the multi-index $\{r_{k,i},  0\leq i\leq k \}$. The terms $|\underline{r,i}|:=\sum_{0\leq i\leq k}r_{k,i}$, $\binom{n}{\underline{r,i}}:=\frac{n!}{\prod_{0\leq i \leq k}r_{k,i}!}$ and $\underline{b,i}^{\underline{r,i}}:=\prod_{0\leq i\leq k}b_{k,i}^{r_{k,i}}$ are defined similarly as before. Moreover, if $n< q^t$, then $\binom{n+q^t}{\underline{r,i}+q^t}:=\frac{\left(n+q^t\right)!}{(r_{0,0}+q^t)!\prod_{0\leq i \leq k,k>0}r_{k,i}!}$\vskip8pt

And $Q_{n,k}(x)$ is defined s.t.

\[EQ_{n,k}(E^{q+1})=\pi^k b_{n,k}(E)\; .\]

\vskip8pt

\begin{rem}
\begin{enumerate}
	\item Observe that when $l>m$, $c_m d_l\neq 0$ iff $c_{l-q^{\left\lfloor \log_q(l)\right\rfloor}}d_{m+q^{\left\lfloor \log_q(l)\right\rfloor}}\neq 0$. When $m>l$, $c_m d_l\neq 0$ iff $c_{l+q^{\left\lfloor \log_q(m)\right\rfloor}}d_{m-q^{\left\lfloor \log_q(m)\right\rfloor}}\neq 0$
	\item Suppose $|\underline{r,i}|=n$. If $q^t>n $, then $p|\binom{n}{\underline{r,i}}-\binom{n+q^{t}}{\underline{r,i}+q^t}$. We can prove $p|\binom{n+q^t}{r+q^t}-\binom{n}{r}$ first:\vskip8pt
	For any $r< k\leq n$, $ord_p(q^t+k)=ord_p(k)$ and $\frac{q^t+k}{p^{ord_p(k)}}\equiv \frac{k}{p^{ord_p(k)}} \mod p$ , so $p|\binom{n+q^t}{r+q^t}$ iff $p|\binom{n}{r}$. If $p\nmid\binom{n}{r}$, $\prod_{r< k\leq n}\frac{k}{p^{ord_p(k)}}\equiv \prod_{r< k\leq n}\frac{q^t+k}{p^{ord_p(k)}}\mod p$ and so the result follows as $\binom{n}{r}=\frac{\prod_{r< k\leq n}\frac{k}{p^{ord_p(k)}}}{\prod_{0<k\leq n-r}\frac{k}{p^{ord_p(k)}}} $ and $\binom{q^t+n}{q^t+r}=\frac{\prod_{r< k\leq n}\frac{q^t+k}{p^{ord_p(k)}}}{\prod_{0<k\leq n-r}\frac{k}{p^{ord_p(k)}}} $.\vskip8pt
	And the general case follows as $\binom{n}{\underline{r,i}}-\binom{n+q^{t}}{\underline{r,i}+q^t}=\left(\binom{n}{r_{0,0}}-\binom{n+q^{t}}{r_{0,0}+q^t}\right)\frac{(n-r_{0,0})!}{\prod_{0\leq i\leq k,k>0}r_{k,i}!}$.
	\item $E^{q+1}-1|E-E^{q^{\left\lfloor \log_q(l)\right\rfloor}}$ when $d_l\neq 0$ as $\left\lfloor \log_q(l)\right\rfloor$ is even for such $l$. $E^{q+1}-1|E^{1+q^{\left\lfloor \log_q(m)\right\rfloor}}$ when $c_m\neq 0$ and $m>0$ as $\left\lfloor \log_q(m)\right\rfloor$ is odd for such $m$.
\end{enumerate}
\end{rem}

\vskip12pt

\subsection{The function $R$}
To express the sought-for estimates of the functions $b_{n,k}$, we need to define some auxiliary functions and study their basic properties.

\vskip8pt

\begin{dfn} 

\begin{enumerate}
	\item For $r\geq 0$, define $T_r$ by the
	
	\[T_r=1+q+q^2+\dots+q^r\;.\]
	\vskip8pt
	
	\item Suppose $n>0$ is an integer such that $T_r \leq n <T_{r+1}$. For, $l\in \mathbb{Z}_{\geq 0}$, define $n_l$ backward inductively by
	
	\[n_l=\begin{cases} 0, &\text{if }l\geq r+1 \\
	\left\lfloor \frac{n-n_rT_r-n_{r-1}T_{r-1}-\dots-n_{l+1}T_{l+1}}{T_l}\right\rfloor, &\text{if }0\leq l \leq r\;.\\
	\end{cases}\]
	
		\vskip8pt
		
	Define a mapping $\sigma:\mathbb{Z}\rightarrow \oplus_{\mathbb{Z}_{\geq 0}}\mathbb{Z}_{\geq 0}$ by
	
	\[ \sigma(n)=\begin{cases}
	\{n_l\}_{l\in \mathbb{Z}_{\geq 0}}, & \text{ if }n \geq 0\\
	\{0\}_{l\in \mathbb{Z}_{\geq 0}},& \text{ if }n<0\;.\\
	\end{cases}\]
	
	\vskip8pt
	\item Define functions	$R',P:\oplus_{\mathbb{Z}_{\geq 0}}\mathbb{Z}_{\geq 0}\rightarrow \mathbb{Z}_{\geq 0}$ by
	
	\[R'\left(\{n_l\}_{l\in \mathbb{Z}_{\geq 0}}\right)  = \sum_l n_lq^l\]
	\vskip8pt
	
	and
	
	\[P\left(\{n_l\}_{l\in \mathbb{Z}_{\geq 0}}\right)  = \sum_l n_lT_l\;.\]
	\vskip8pt
	
	\item Finally, define $R$ by
	
	\[R(n):=R'\left(\sigma(n)\right)\;.\]
	\vskip8pt
	
\end{enumerate}
\end{dfn}

\begin{rem}
\label{rem1}
\begin{enumerate}
	\item In the above definition, the sequences $\{n_l\}$ for some non-negative integer $n$ is characterized by:
	\vskip8pt
		
		\begin{itemize}
			\item $n_l\leq q$ for all $l$
			
			\vskip5pt
			
			\item $n_l=q$ for at most one $l$. If there exist such $l$, $n_k=0$ for all $k<l$.
		\end{itemize}
		
		\vskip8pt
		
	This is because $qT_r+1=T_{r+1}$ and $(q-1)T_r+(q-1)T_{r-1}+\dots+(q-1)T_{s+1}+qT_s = T_{r+1}-(r-s+1)<T_{r+1}$ if $r\geq s$.
	
\vskip8pt
		
	\item $P$ is the left inverse of $\left.\sigma\right|_{\mathbb{Z}_{\geq 0}}$.
	
	\vskip8pt
	
	\item We can give $\oplus_{\mathbb{Z}_{\geq 0}}\mathbb{Z}_{\geq 0}$ a lexicographical order: $\{n_l\}>\{m_l\}$ if there exists $N\geq 0$ s.t. $n_N>m_N$ and $n_l=m_l$ for all $l>N$. Moreover, if $\{n_l\}=\sigma(n)$, then $\{n_l\}\geq \{m_l\}$ for all $\{m_l\}$ s.t. $P(m_l)=n$.\vskip8pt
\end{enumerate}
\end{rem}

\vskip8pt

Here are some basic properties of the map $R$.

\vskip8pt

\begin{lem}
\label{lem3}
\begin{enumerate}
	\item $R(n)$ is non-decreasing and $R(n+1)-R(n)\leq 1$\;.
	\vskip8pt
	\item $R(i+j)\leq R(i)+R(j)$\;.
	\vskip8pt
	\item If $n>0$, then $qR(n)\geq R(qn+1)$\;.
	\vskip8pt
	\item $R(n)\geq \frac{q-1}{q}n$ and the equality only holds at $n=0$\;.
\end{enumerate}
\end{lem}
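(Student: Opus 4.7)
The proof rests on the arithmetic identity
\[
qR(n) = (q-1)n + S(n), \qquad S(n) := \sum_l n_l,
\]
which follows at once from $T_l = (q^{l+1}-1)/(q-1)$ upon expanding $(q-1)n = (q-1)\sum_l n_l T_l$. Given this identity, part (4) is immediate since $S(n) \geq 0$ with equality if and only if $n = 0$.

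For part (1), I would trace how the greedy expansion of $n+1$ is obtained from that of $n$ by a case analysis according to whether the greedy expansion of $n$ contains a digit equal to $q$ (and where) and the value of $n_0$. Using the characterizations recorded in Remark~\ref{rem1} together with the carry relations $(q+1)T_0 = T_1$ and $qT_l + T_0 = T_{l+1}$, one checks directly that $S(n+1) - S(n) \in \{1, -(q-1)\}$: the value is $+1$ when no carry is triggered and $-(q-1)$ when one is. The arithmetic identity then converts this into $R(n+1) - R(n) \in \{0, 1\}$, giving both the monotonicity and the Lipschitz bound.

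For part (2), the identity reduces $R(a+b) \leq R(a) + R(b)$ to the subadditivity statement $S(a+b) \leq S(a) + S(b)$. The pointwise sum $m_l := n_l(a) + n_l(b)$ of greedy digits is a (generally non-greedy) representation of $a+b$ with digit sum $S(a) + S(b)$, so the content of the inequality is the auxiliary fact that the greedy expansion minimizes the digit sum among all nonnegative representations $a+b = \sum m_l T_l$. This minimality is the main obstacle, and I would prove it by a reduction argument exploiting $T_{l+1} = qT_l + T_0$ and $T_1 = (q+1)T_0$: whenever a representation has $m_l \geq q$ and $m_0 \geq 1$, replacing $q$ copies of $T_l$ together with one copy of $T_0$ by a single $T_{l+1}$ decreases the digit sum by $q$ while preserving the value, and analogous moves handle the remaining violations of greediness. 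An inductive argument then shows that iterating such moves transforms any representation into the greedy one without increasing the digit sum.

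Finally, for part (3), I would argue by induction on $S(n)$. The base case $S(n) = 1$ is immediate: then $n = T_m$, $qn + 1 = T_{m+1}$, and both sides equal $q^{m+1}$. For the inductive step, let $l$ be the largest index with $n_l \geq 1$ in the greedy expansion of $n$, and set $n' := n - T_l$; this $n'$ is again in greedy form with $S(n') = S(n) - 1$ and $R(n') = R(n) - q^l$. The identity
\[
qn + 1 = (qn' + 1) + qT_l = (qn' + 1) + (T_{l+1} - 1),
\]
together with the computation $R(qT_l) = q^{l+1}$ (the greedy expansion of $qT_l = T_{l+1} - 1$ consists of a single digit $q$ at position $l$), permits applying part (2) and the inductive hypothesis $R(qn'+1) \leq qR(n')$ to obtain
\[
R(qn+1) \leq qR(n') + q^{l+1} = q\bigl(R(n) - q^l\bigr) + q^{l+1} = qR(n).
\]
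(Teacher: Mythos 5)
Your argument is correct, and the organizing identity $qR(n)=(q-1)n+S(n)$ (with $S(n)=\sum_l n_l$), which follows from $(q-1)T_l=q\cdot q^l-1$, is a genuinely different and rather cleaner scaffolding than the paper's. The paper instead builds everything on a single minimization claim -- among all sequences $\{N_l\}$ with $P(\{N_l\})=n$, the greedy expansion $\sigma(n)$ minimizes $R'$ -- proved by rewriting moves (its Cases 1A/1B), and then deduces (1), (2), (3) by exhibiting explicit non-greedy representations of $n+1$, $i+j$, and $qn+1$; part (4) uses the same identity you start from. Your identity makes (4) immediate, turns (1) into a direct carry analysis of the greedy digits (so you avoid the minimization claim there entirely), and lets you prove (3) by induction on $S(n)$ using subadditivity plus $R(qT_l)=q^{l+1}$, which is arguably tighter than the paper's argument (the paper's strict inequality $R'(\{N_l\})>R(qn+1)$ at that point is asserted somewhat quickly). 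The one place where your route and the paper's necessarily coincide is part (2): your ``greedy minimizes the digit sum'' is exactly the paper's minimization claim, since $qR'(\{m_l\})=(q-1)P(\{m_l\})+\sum_l m_l$ shows minimizing $R'$ and minimizing the digit sum over representations of a fixed $n$ are the same problem. Your sketch there is the thinnest part of the proposal: besides the move $qT_l+T_0\mapsto T_{l+1}$ you need the value- and digit-sum-preserving moves $qT_l+T_{j_0}\mapsto T_{l+1}+qT_{j_0-1}$ (for $j_0\ge 1$) and $(q+1)T_l\mapsto T_{l+1}+qT_{l-1}$ to reach greedy form, and a termination argument (e.g.\ strict increase in the lexicographic order of Remark 2.3.2, bounded above by $\sigma(n)$). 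These are exactly the paper's Cases 1A/1B, so supplying them is routine, but they should be written out.
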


\begin{proof}
Fix $n\in \mathbb{Z}_{\geq 0}$, Set $\{n_l\}=\sigma(n)$. Suppose $\{N_l\}\in \oplus_{\mathbb{Z}_{\geq 0}}\mathbb{Z}_{\geq 0}$ s.t. $P(\{N_l\})$ also equals to $n$.\vskip8pt
Claim: $R'(\{N_l\})\geq R'(\{n_l\})=R(n)$. Equivalently, $R'$ attains minimum at the biggest element $\{n_l\}$ among $\{N_l\}$ with $P(\{N_l\})=n$.  \vskip8pt
Proof of the claim. Suppose $n<T_{r+1}$. Then for all $\{N_l\}$ with $P(\{N_l\})=n$, $N_l=0$ for all $l\geq r+1$.

\vskip8pt

Case 1) \;\; $\exists l \geq 0 $ s.t. $N_l\geq q$. Choose $i_0$ be the largest such index.

\vskip8pt

Case 1A) \;\; $\exists l<i_0$ s.t. $N_l\neq 0$. Choose $j_0$ be the smallest such index. Define $\tilde{N}_l$ by

\[\tilde{N}_l=\begin{cases}
N_l, &\text{ if }l\neq i_0+1,i_0,j_0,j_0-1\\
N_{i_0+1}+1, &\text{if }l=i_0+1\\
N_{i_0}-q, &\text{if }l=i_0\\
N_{j_0}-1, &\text{if }l=j_0\\
q, &\text{if }l=j_0-1\geq 0\\
\end{cases}\]

\vskip8pt

Then $P(\{\tilde{N}_l\})=n$ and $\{\tilde{N}_l\}>\{N_l\}$ in lexicographical order. Also, $R'(\{\tilde{N}_l\})=R'(\{N_l\})$ if $j_0\geq 1$ and $R'(\{\tilde{N}_l\})=R'(\{N_l\})-1$ if $j_0=0$.

\vskip8pt

Case 1B)\;\; $N_l=0$ for all $l<i_0$ and $N_{i_0}\geq q+1$. Define $\tilde{N}_l$ by

\[\tilde{N}_l=\begin{cases}
N_l, &\text{ if }l\neq i_0+1,i_0,i_0-1\\
N_{i_0+1}+1, &\text{if }l=i_0+1\\
N_{i_0}-q-1, &\text{if }l=i_0\\
q, &\text{if }l=i_0-1 \geq 0\\
\end{cases}\]
\vskip8pt

Again, $P(\{\tilde{N}_l\})=n$ and $\{\tilde{N}_l\}>\{N_l\}$. In this situation, we have $R'(\{\tilde{N}_l\})=R'(\{N_l\})$ if $i_0\geq 1$ and $R'(\{\tilde{N}_l\})=R'(\{N_l\})-1$ if $i_0=0$.

\vskip8pt

Case 1C) \;\; $N_l=0$ for all $l<i_0$ and $N_{i_0}= q$. Then $\{N_l\}=\{n_l\}$ by the remark \ref{rem1}.\vskip8pt
Case 2)\;\; $N_l\leq p-1$ for all $l$. Then $\{N_l\}=\{n_l\}$ as in Case 1C.

\vskip8pt

The conclusion we can draw here is that we can increase the lexicographical order of $\{N_l\}$ successively while the $R'$ value does not increase at the same time until we get $\{n_l\}$. Hence the claim follows.

\vskip8pt

We will first focus on $n,i,j\geq 0$.
Now for part (2) of the Lemma, we define $\{i_l\}:=\sigma(i)$ and $\{j_l\}:=\sigma(j)$. Define $N_l:=i_l+j_l$ for all $l$. Observe that $P(\{N_l\})=i+j$, so $R(i)+R(j)=R'(\{N_l\})\geq R(i+j)$.

\vskip8pt

For part (1) of the lemma, define $N_l$ by:

\[N_l=
\begin{cases}
n_l, &\text{if }l>0\\
n_0+1, &\text{if }l=0
\end{cases}
\]

\vskip8pt

where $\{n_l\}=\sigma(n)$. In particular, $P(\{N_l\})=n+1$. Hence,

\[R(n)+1=R'(\{N_l\})\geq R(n+1) \;.\]

\vskip8pt

Since $\{n_l\}=\sigma$, so either $n_l\leq q-1$ for all $l$, or $n_l\leq q-1$ except $n_i=q$ for some $i\geq 0$ and $n_l=0$ for all $l<i$.(Case 1C and 2 in the proof of previous claim.) In the first case, $\{N_l\}$ is in Case 1C or 2, hence $\{N_l\}=\sigma(n+1)$ and $R(n+1)=R(n)+1$. In the second case, $\{N_l\}$ is in Case 1A or 1B and it is not hard to see $R'(\{N_l\})=R(n+1)+1$ and hence $R(n+1)=R(n)$ in this case. In particular, $R(n+1)\geq R(n)$ in all cases.

\vskip8pt

For part (3), define $N_l$ by

\[N_l=
\begin{cases}
qn_l, &\text{if }l>0\\
qn_0+1, &\text{if }l=0
\end{cases}
\]

\vskip8pt

where $\{n_l\}=\sigma(n)$. In particular, $P(\{N_l\})=qn+1$ and $R'(\{N_l\})=qR(n)+1$. Since $n>0$, $\{n_l\}\neq 0$ and $\{N_l\}$ is in Case 1A or 1B. Hence $qR(n)+1=R'(\{N_l\})>R(qn+1) $ and thus, $qR(n)\geq R(qn+1)$.

\vskip8pt

It is clear that when $n< 0$, $R(n)=0\leq R(n+1)\leq 1$. So part 1 follows. If $i\geq 0> j$, then $i+j<i$ and hence $R(i+j)\leq R(i)=R(i)+R(j)$. Similarly for $i,j<0$. Hence part 2 follows.

\vskip8pt

For part (4), Observe that $T_l(q-1)=q\cdot q^l-1$ for all $l>0$ and hence $(q-1)\sum n_l T_l< q \sum n_l q^l$ unless $\{n_l\}=\{0\}$ in which the equality holds. Therefore $R(n)\geq \frac{q-1}{q}n$ when $n\geq 0$. And it is also clear that the strict inequality holds for $n<0$. \end{proof}

\vskip12pt

\subsection{Estimates for the norms of the coefficients}

We start by estimating the order of vanishing of the polynomial $Q_{n,n}$ at $x=1$.

\begin{prop}
$\text{ord}_{x-1}(Q_{n,n})\geq R(n)$. Furthermore, the equality holds when $n=T_l$ for some $l\geq 0$.  In particular, $Q_n(x)\equiv (x-1)^{R(n)}h(x) \mod \pi$ for some $h\in \mathbb{Z}[\pi][x]$.

\end{prop}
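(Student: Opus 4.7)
I would prove the bound $\text{ord}_{x-1}(Q_{n,n}) \ge R(n)$ by strong induction on $n$. The base cases $n=0,1$ are immediate ($Q_{0,0}=1$ with $R(0)=0$; $Q_{1,1}(x)=x-1$ with $R(1)=1$), and the explicit formulas in Section~2.1 confirm the claim for $n \le 4$. For the inductive step, assume the bound for all $k<n$ and analyze the recursion (\ref{eq4}) defining $b_{n,n}$. Each summand has the shape (prefactor in $E$)$\,\cdot\,c_m d_l \cdot$(combinatorial coefficient)$\,\cdot\prod b_{k,i}^{r_{k,i}}$, subject to the two constraints $\sum k\,r_{k,i} = n - (m+l-1)/(q+1)$ and $\sum i\,r_{k,i} = n + \nu(c_m d_l) + \epsilon$ with $\epsilon \in \{0,1\}$.

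Since $0 \le i \le k$, these constraints force $\sum(k-i)r_{k,i} = -(m+l-1)/(q+1) - \nu(c_m d_l) - \epsilon \ge 0$, a non-negativity verified by direct inspection of the explicit formulas for $c_m$ and $d_l$. Equality here is the \emph{tight} case, in which $i=k$ whenever $r_{k,i}>0$ and the induction hypothesis applies directly. In \emph{slack} situations (some $i<k$) the gap in $\pi$-valuation, together with the $p$-divisibility of the binomial differences appearing in (\ref{eq4}) via Remark~(2), relocates the contribution into lower components $b_{n,s}$ with $s<n$ rather than into $b_{n,n}$.

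For a tight summand, the product $\prod b_{k,k}^{r_{k,k}}$ vanishes at $E^{q+1}=1$ to order at least
\[\sum r_{k,k}\, R(k) \;\ge\; R\!\Bigl(\sum k\,r_{k,k}\Bigr) \;=\; R\!\Bigl(n - \tfrac{m+l-1}{q+1}\Bigr) \;\ge\; R(n) - R\!\Bigl(\tfrac{m+l-1}{q+1}\Bigr),\]
by iterated subadditivity of $R$ (Lemma~\ref{lem3}(2)). The prefactor $E - E^{q^{\lfloor\log_q l\rfloor}}$ (resp.\ $E^{1+q^{\lfloor\log_q m\rfloor}}-1$) is divisible by $E^{q+1}-1$ thanks to Remark~(3); using $qR(j) \ge R(qj+1)$ (Lemma~\ref{lem3}(3)) and the greedy $T$-base structure of $R$, one checks the prefactor in fact carries at least $R((m+l-1)/(q+1))$ factors of $(E^{q+1}-1)$. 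Combining these estimates yields $\text{ord}_{x-1}(Q_{n,n}) \ge R(n)$. The equality for $n = T_l$ is then obtained by exhibiting one distinguished summand---constructed by iterated application of line~5 of (\ref{eq4}) at $(m,l) = (q+1,1)$---in which all bounds are sharp, producing the exact order $q^l = R(T_l)$.

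For the final mod-$\pi$ claim, observe that $Q_n(x) = \sum_s \pi^{n-s} Q_{n,s}(x) \equiv Q_{n,n}(x) \pmod\pi$; combined with the order bound and the fact that $Q_{n,n} \in \fro_K[x]$, polynomial division yields $Q_{n,n}(x) = (x-1)^{R(n)}\tilde h(x)$ for some $\tilde h \in \fro_K[x]$, and reduction produces the required $h$. The main technical obstacle is the claim in the previous paragraph that the $E$-prefactor contributes at least $R((m+l-1)/(q+1))$ additional factors of $(E^{q+1}-1)$; the delicate combinatorial analysis this requires---matching the $q$-adic data $\lfloor\log_q\cdot\rfloor$ in the recursion with the greedy $T$-base expansion underlying $R$---is where the bulk of the work lies.
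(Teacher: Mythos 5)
Your overall strategy (induction on $n$, reduction to the ``tight'' summands of \ref{eq4} with $m+l=q+2$ and $i=k$, then combining subadditivity of $R$ with Lemma \ref{lem3}) is the same as the paper's, but the inductive step has a genuine gap in exactly the place you flag as the ``main technical obstacle.'' After the reduction, $b_{n,n}$ consists of two kinds of terms: (a) $\pi^{-1}(E^{q+1}-1)\,b_{n-1,n-1}$, whose prefactor does carry one factor of $E^{q+1}-1$, and (b) the terms $\pi^{-1}E\binom{q+1}{\underline{r}}\prod_k b_{k,k}^{r_{k,k}}$ with $|\underline{r}|=q+1$ and $\sum_k k\,r_{k,k}=n-1$, whose prefactor is just $E$ and is \emph{not} divisible by $E^{q+1}-1$. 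For the terms (b), your estimate gives only $\sum_k r_{k,k}R(k)\ge R\bigl(\sum_k k\,r_{k,k}\bigr)=R(n-1)\ge R(n)-1$, and there is no prefactor to make up the missing $1$; your blanket claim that the $E$-prefactor contributes $R\bigl(\tfrac{m+l-1}{q+1}\bigr)$ factors of $E^{q+1}-1$ is false here (and in general such prefactors, e.g. $E-E^{q^{2a}}$, vanish to order exactly one along $E^{q+1}-1$, never more). The missing idea is the one the paper uses: in every term of type (b) one necessarily has $r_{k_0,k_0}\ge q$ for some $k_0>0$, so Lemma \ref{lem3}(3) gives $q R(k_0)\ge R(qk_0+1)$ and hence $\sum_k r_{k,k}R(k)\ge R\bigl(1+\sum_k k\,r_{k,k}\bigr)=R(n)$; the extra unit comes from the \emph{product}, not the prefactor.

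Two smaller points. For the equality at $n=T_{l+1}$ the distinguished summand is not obtained from the $(E^{q+1}-1)b_{n-1,n-1}$ line (that term has order $1+R(T_{l+1})$, strictly too large); it is the type-(b) term $\pi^{-1}E\binom{q+1}{1}b_{T_l,T_l}^{q}b_{0,0}$, of order exactly $qR(T_l)=R(T_{l+1})$, and one must also verify that every other summand has strictly larger order so that no cancellation can occur (the paper does this by a separate lexicographic argument on $\sigma(qT_l)$). Your derivation of the final mod-$\pi$ statement from $Q_n\equiv Q_{n,n}\pmod{\pi}$ is correct and matches the paper.
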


\begin{proof}
We will use induction on $n$. When $s=n>0$, equation \ref{eq4} becomes

$$b_{n,n} \; = \; \frac{E^{q+1}-1}{\pi}b_{n-1,n-1} + \pi^{-1}E \sum_{\substack{|\underline{r,i}|=q+1\\
	r_{k,i}=0 \text{ if }i<k\\
	r_{k,k}\geq q,\,\, \exists k \text{ with } k>0\\
	\sum_{k} k r_{k,k}=n-1\\
		} } \binom{q+1}{\underline{r,i}}\underline{b,i}^{\underline{r,i}}$$

\vskip8pt

as $i\leq k$ and $\nu(c_m)+\nu(d_l)>-\frac{m+l-1}{q+1}$ for $m+l>q+2$.\vskip8pt

Here $\text{ord}_{E^{q+1}-1}\frac{E^{q+1}-1}{\pi}b_{n-1,n-1}\geq 1+R(n-1)\geq R(n)$ by induction.\vskip8pt

Also, $\text{ord}_{E^{q+1}-1}\underline{b,i}^{\underline{r,i}}\geq \sum_k r_{k,k}R(k)\geq R(1+\sum kr_{k,k})=R(n)$ by lemma $\ref{lem3}$(3) as $r_{k,k}\geq q$ for some $k>0$.\vskip8pt

Therefore, we get $\text{ord}_{x-1}(Q_{n,n})\geq R(n)$.\vskip8pt

$b_{T_0,T_0}=b_0=E$ and $b_{T_1,T_1}=b_1=E\frac{E^{q+1}-1}{\pi}$, so $b_{T_l}=R(T_l)$ for $l=0,1$. And we proceed with induction.

\vskip8pt

Suppose now $n=T_{l+1}$ with $l>0$, then

$$b_{n,n} \; = \; \frac{E^{q+1}-1}{\pi}b_{n-1,n-1}+ \pi^{-1}E\binom{q+1}{1}b_{T_{l},T_{l}}^{q}b_0 + \pi^{-1}E \sum_{\substack{|\underline{r,i}|=q+1\\
	r_{k,i}=0 \text{ if }i<k\\
	r_{k,k}\geq q,\,\, \exists k \text{ with } k\neq 0,T_{l}\\
	\sum_{k} k r_{k,k}=n-1\\
		} } \binom{q+1}{\underline{r,i}}\underline{b,i}^{\underline{r,i}}$$

\vskip8pt

Here $\text{ord}_{E^{q+1}-1}\frac{E^{q+1}-1}{\pi}b_{n-1,n-1}\geq 1+R(n-1)=1+ R(n)$ by induction as $R(n-1)=R(qT_l)=q^{l+1}=R(T_{l+1})=R(n)$.

\vskip8pt

Also, for some $k\neq 0,T_l$ and $r_{k,k}>q$, $\text{ord}_{E^{q+1}-1}\underline{b,i}^{\underline{r,i}}\geq \sum_k r_{k,k}R(k)> R(\sum kr_{k,k})=R(qT_l)=R(T_{l+1})$ because of the following:

\vskip8pt

It is clear that $\sum r_{k,k}\sigma(k)<\sigma(qT_{l})=:\{n_r\}_{r\geq 0}$ and $n_r=0$ for all $r\geq$ except $n_l=q$. Then there finite many steps as in the proof of Lemma \ref{lem3}, $\sum r_{k,k}\sigma(k)<\{N^1_r\}<\{N^2_r\}<\dots<\{N^k_r\}<\sigma(qT_{l})$ with non-increasing $R'$ values. Since $n_r=0$ for $r\neq l$, we see that $N^k_0$ can only be $1$, and so $R'(\{N^k_r\})=1+R(qT_l)$.\vskip8pt

Finally, $\text{ord}_{E^{q+1}-1}b_{T_{l},T_{l}}^{q}b_0=qR(T_l)=R(T_{l+1})$ and hence $\text{ord}_{E^{q+1}-1}b_{T_{l+1},T_{l+1}}=R(T_{l+1})$. \end{proof}

\vskip8pt

\begin{rem}
The above proposition show that the lower bound $R(n)$ is actually sharp for infinitely many $n$.
\end{rem}

\vskip8pt

Now we consider the vanishing order of $Q_{n,s}$ at $x=1$.

\begin{prop}
\label{prop1}
\begin{enumerate}
	\item $\text{ord}_{x-1}Q_{n,s}\geq R\left(s-2\left\lfloor \frac{n-s}{q-1}\right\rfloor\right)$ for all $0\leq s \leq n$\;.
	\item When $s=2\left\lfloor \frac{n-s}{q-1}\right\rfloor$ and $n>0$, $\text{ord}_{x-1}Q_{n,s}\geq 1$\;.
\end{enumerate}

\end{prop}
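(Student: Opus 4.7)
The plan is to prove (1) and (2) simultaneously by induction on $n$, along the lines of the preceding proposition but now tracking the $(x-1)$-order of every $Q_{n,s}$ rather than only $Q_{n,n}$. The base cases follow from the explicit formulas for $b_0,b_1,b_2$ at the end of Section \ref{power_series}.

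For the inductive step I apply the recursive formula \ref{eq4} and examine each summand, which has the form $f_j(E)\cdot \underline{b,i}^{\underline{r,i}}$ with multi-indices satisfying $\sum_{k,i} k\,r_{k,i}=N-\tfrac{m+l-1}{q+1}$ and $\sum_{k,i} i\,r_{k,i}=s+\nu(c_m d_l)+\delta$ for some $\delta\in\{0,1\}$. By the induction hypothesis $\text{ord}_{x-1}Q_{k,i}\ge R(i-2\lfloor (k-i)/(q-1)\rfloor)$, so by the subadditivity and monotonicity of $R$ (Lemma \ref{lem3}(1),(2)) the $(x-1)$-order of $\underline{b,i}^{\underline{r,i}}$ is at least $R\bigl(\sum r_{k,i}(i-2\lfloor(k-i)/(q-1)\rfloor)\bigr)$. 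The preceding remark furthermore shows that each prefactor $E-E^{q^{\lfloor \log_q l\rfloor}}$ or $E^{1+q^{\lfloor \log_q m\rfloor}}-1$ contributes an extra $1$ to the vanishing order.

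The heart of the proof is then the arithmetic inequality $\sum r_{k,i}\bigl(i-2\lfloor(k-i)/(q-1)\rfloor\bigr)+\epsilon_j \geq s-2\lfloor(n-s)/(q-1)\rfloor$, where $\epsilon_j\in\{0,1\}$ records the order contributions of $f_j(E)$ and of the binomial-difference shift. Combining $\sum(k-i)r_{k,i}=N-s-\tfrac{m+l-1}{q+1}-\nu(c_m d_l)-\delta$ with the valuations $\nu(c_m)=-(k_m+1)/2$ and $\nu(d_l)=-k_l/2$ (for $m,l$ of the prescribed digit shape), one sees that the factor $(q-1)$ in the denominator of $\lfloor(n-s)/(q-1)\rfloor$ is exactly what absorbs the valuation drops picked up when descending through the recursion.

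For part (2) the target equals $R(0)=0$, so a strict positivity argument is needed. I would show that each summand is either a product involving a prefactor $f_j(E)$ divisible by $E^{q+1}-1$, or involves a binomial-coefficient difference $\binom{m}{\underline{r,i}}-\binom{m+q^t}{\underline{r,i}+q^t}$ (forcing $\delta=1$ and bumping the inductive estimate by one), or is a product in which the inductive bound $R(i-2\lfloor(k-i)/(q-1)\rfloor)$ is already strictly positive because $k-i\ge q-1$ in the relevant range. The main obstacle is the combinatorial bookkeeping of the arithmetic inequality across all eight summand types in \ref{eq4}: the non-linearity of $\lfloor \cdot/(q-1)\rfloor$ interacts delicately with $\nu(c_m d_l)$ and with the rational quantity $(m+l-1)/(q+1)$, so each case must be handled separately. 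Conceptually the argument mirrors that of the previous proposition, but technically it requires a significantly more involved case analysis.
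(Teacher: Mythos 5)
Your strategy is the one the paper itself follows --- induction on $n$ through the recursion \ref{eq4}, subadditivity and monotonicity of $R$ to pass from $\sum r_{k,i}R\left(i-2\left\lfloor\tfrac{k-i}{q-1}\right\rfloor\right)$ down to $R$ of the sum, and an extra unit of vanishing from the prefactors --- but the proposal stops exactly where the work begins. The inequality
\[
s+\nu(c_md_l)-2\left\lfloor \frac{N-\frac{m+l-1}{q+1}-s-\nu(c_md_l)}{q-1}\right\rfloor\;\geq\; s-2\left\lfloor \frac{N-s}{q-1}\right\rfloor
\]
is not something ``one sees'': it is threatened precisely because $\nu(c_md_l)$ can be as negative as $-\left\lfloor\log_q(m+l)\right\rfloor$. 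The paper proves it by splitting into $q+2<m+l<q^3+2$, where the only nonvanishing products $c_md_l$ occur at $m+l=q^2,\,q^2+q+1,\,q^2+2q+2$ and the inequality is checked by hand, and $m+l\ge q^3+2$, where one needs the quantitative bounds $\frac{m+l-1}{q+1}\ge (q-1)q^{\left\lfloor\log_q(m+l)\right\rfloor-2}-\nu(c_md_l)$ and $2q^{\left\lfloor\log_q(m+l)\right\rfloor-2}+\nu(c_md_l)>0$, which in fact yield a \emph{strict} inequality there (needed later). The boundary case $m+l=q+2$ must also be isolated: there the estimate drops by one and is rescued only by the factor $E^{q+1}-1$ coming from the prefactor. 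Without these verifications the inductive step is simply asserted.

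For part (2) your third alternative is backwards: if $k-i\ge q-1$ then $\left\lfloor\tfrac{k-i}{q-1}\right\rfloor\ge 1$, so $i-2\left\lfloor\tfrac{k-i}{q-1}\right\rfloor$ becomes \emph{smaller} and $R$ of it is more likely to vanish, not to be positive. What is actually required (and what the paper does) is a contradiction argument: assuming $s-2\left\lfloor\tfrac{N-s}{q-1}\right\rfloor=0$ and that every $(k,i)$ with $r_{k,i}>0$ satisfies $i-2\left\lfloor\tfrac{k-i}{q-1}\right\rfloor<0$ or $(k,i)=(0,0)$, one sums these contributions and contradicts the displayed inequality above (using its strict form when $m+l\ge q^3$, and $r_{0,0}<l$ when $m+l<q^3$). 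Hence some $(k,i)$ with $r_{k,i}>0$, $k>0$ has $i-2\left\lfloor\tfrac{k-i}{q-1}\right\rfloor\ge 0$, and divisibility of $b_{k,i}$ by $E^{q+1}-1$ then follows from part (1) of the induction hypothesis when the quantity is positive and from part (2) when it is zero --- which is also why part (2) must be carried along inside the induction rather than deduced afterwards, a point your outline does not make explicit.
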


\begin{proof}
We will prove (1) and (2) by induction on $n$.\vskip8pt

First look at the case $1\leq n \leq q-2$. By definition,

\[b_{1,1}=\frac{E^{q+1}-1}{\pi}E\]
\vskip8pt

 and $b_{1,0}=0$. For $2\leq n \leq q-2$ and $0<s\leq n$,

$$b_{n,s} \; = \; \frac{E^{q+1}-1}{\pi}b_{n-1,s-1}  +\frac{q}{\pi}E^{q+1}b_{n-1,s} +\pi^{-1}E \sum_{\substack{|\underline{r,i}|=q+1\\
	r_{k,i}\leq q-1,\,\, \forall (k,i) \\
	\sum_{k,i} k r_{k,i}=n-1\\
	\sum_{k,i} i r_{k,i}=s\\
	} } \binom{q+1}{\underline{r,i}}\underline{b,i}^{\underline{r,i}} \;.$$
	
\vskip8pt

And $ b_{n,0}=0$ for $2\leq n\leq q-2$. Notice that $0=2\left\lfloor \frac{n-0}{q-1}\right\rfloor$ iff $0\leq n\leq q-2$.\vskip8pt

By induction, we can show $\text{ord}_{x-1}Q_{n,s}\geq s$ when $0\leq s \leq n\leq q-2$.\vskip8pt

Now suppose the statement for $0\leq s\leq n<N$ where $N\geq q-1$.\vskip8pt
We will check each term of $b_{N,s}$ in equation \ref{eq4}.\vskip8pt

For terms $\left(E^{1+q^{\left\lfloor \log_q(m)\right\rfloor}}-1\right)\binom{l}{\underline{r,i}}\underline{b,i}^{\underline{r,i}}$ with $l<m$, $|\underline{r,i}|=l$, $\sum_{k,i} kr_{k,i}=N-\frac{m+l-1}{q+1}$ and $\sum_{k,i}ir_{k,i}=s+\nu(c_md_l)$,

\[\begin{aligned}
\text{ord}_{E^{q+1}-1} & \geq 1+\sum r_{k,i}R\left(i-2\left\lfloor \frac{k-i}{q-1}\right\rfloor\right)\\
 & \geq 1+R\left(\sum ir_{k,i}-2\sum r_{k,i} \left\lfloor \frac{k-i}{q-1}\right\rfloor\right)\\
 & \geq 1+R\left(s+\nu(c_md_l)-2\left\lfloor \frac{N-\frac{m+l-1}{q+1}-s-\nu(c_md_l)}{q-1}\right\rfloor\right)\;.\\
\end{aligned}\]
\vskip8pt

Observe that if $m+l>q+2$,

\[s+\nu(c_md_l)-2\left\lfloor \frac{N-\frac{m+l-1}{q+1}-s-\nu(c_md_l)}{q-1}\right\rfloor\geq s-2\left\lfloor \frac{N-s}{q-1}\right\rfloor\;.\]
\vskip8pt

To show this, we can separate in into two cases: $q+2<m+l<q^3+2$ and $m+l\geq q^3+2$. For the first case, the only non zero $d_lc_m$ is at $m+l=q^2,q^2+q+1,q^2+2q+2$. And the inequality can be check directly. For the second case, notice that $-\left\lfloor \log_q(m+l)\right\rfloor\leq \nu(c_md_l)\leq 0$ and hence
$\frac{m+l-1}{q+1}\geq (q-1)q^{\left\lfloor \log_q(m+l)\right\rfloor-2}-(\nu(c_md_l))$ and $2q^{\left\lfloor \log_q(m+l)\right\rfloor-2}+ \nu(c_md_l)> 0$. So we actually have $s+\nu(c_md_l)-2\left\lfloor \frac{N-\frac{m+l-1}{q+1}-s-\nu(c_md_l)}{q-1}\right\rfloor> s-2\left\lfloor \frac{N-s}{q-1}\right\rfloor$ in the second case.\vskip8pt

Hence,

\[\text{ord}_{x-1}\geq \begin{cases}
1+R\left(s-1-2\left\lfloor \frac{N-s}{q-1}\right\rfloor\right), & \text{ if }m+l=q+2\\
1+R\left(s-2\left\lfloor \frac{N-s}{q-1}\right\rfloor\right), & \text{ if }m+l>q+2\\
\end{cases}\] 
\vskip8pt

which is at least $R\left(s-2\left\lfloor \frac{N-s}{q-1}\right\rfloor\right)$. And for any $N$ and $s$, it is at least 1.\vskip8pt
Similarly for the respective term when $m<l$.

\vskip8pt

For terms $\binom{l}{\underline{r,i}}\underline{b,i}^{\underline{r,i}}$ with $l<m$, $|\underline{r,i}|=l$ or $m$, $\sum_{k,i} kr_{k,i}=N-\frac{m+l-1}{q+1}$ and $\sum_{k,i}ir_{k,i}=s+\nu(c_md_l)+1$,

\[\begin{aligned}
\text{ord}_{E^{q+1}-1} & \geq R\left(s+\nu(c_md_l)+1-2\left\lfloor \frac{N-\frac{m+l-1}{q+1}-s-\nu(c_md_l)-1}{q-1}\right\rfloor\right)\\
\\
& \geq \begin{cases}
R\left(s-2\left\lfloor \frac{N-s}{q-1}\right\rfloor\right), & \text{ if }m+l=q+2\\
R\left(s+1-2\left\lfloor \frac{N-s-1}{q-1}\right\rfloor\right), & \text{ if }m+l>q+2\\
\\
\end{cases}\\
& \geq R\left(s-2\left\lfloor \frac{N-s}{q-1}\right\rfloor\right)\;.\\
\end{aligned}\]
\vskip8pt

Suppose $s-2\left\lfloor \frac{N-s}{q-1}\right\rfloor= 0$ and $s\geq 2$, then there must exist $k\geq i$ with $k>0$ s.t. $r_{k,i}>0$ and $i-2\left\lfloor \frac{k-i}{q-1}\right\rfloor\geq 0$. Otherwise, we have all $k,i$ with $r_{k,i}>0$, $i+1-2\left\lfloor \frac{k-i}{q-1}\right\rfloor \leq 0 $ or $k=0=s$. If $m+l<q^3$, then $s+\nu(c_md_l)+1>0$ and so $r_{0,0}<l$. Therefore,

\[0>\sum r_{k,i}\left(i-2\left\lfloor \frac{k-i}{q-1}\right\rfloor\right)\geq s+\nu(c_md_l)+1-2\left\lfloor \frac{N-\frac{m+l-1}{q+1}-s-\nu(c_md_l)-1}{q-1}\right\rfloor\]\[ \geq s-2\left\lfloor \frac{N-s}{q-1}\right\rfloor\geq 0\]
\vskip8pt

 and leads to a contraction. If $m+l\geq q^3$, then

\[0\geq \sum r_{k,i}\left(i-2\left\lfloor \frac{k-i}{q-1}\right\rfloor\right)\geq s+\nu(c_md_l)+1-2\left\lfloor \frac{N-\frac{m+l-1}{q+1}-s-\nu(c_md_l)-1}{q-1}\right\rfloor\]\[ > s-2\left\lfloor \frac{N-s}{q-1}\right\rfloor\geq 0\]
\vskip8pt

and again leads to contradiction. So $E^{q+1}-1$ divides some $b_{k,i}$ with $r_{k,i}>0$ and thus divides $\underline{b,i}^{\underline{r,i}}$.\vskip8pt

For terms $\binom{l}{\underline{r,i}}\underline{b,i}^{\underline{r,i}}$ with $m<l$, $|\underline{r,i}|=l$, $r_{0,0}<q^{\left\lfloor \log_q(l)\right\rfloor}$, $p\nmid\binom{l}{\underline{r,i}}$, $\sum_{k,i} kr_{k,i}=N-\frac{m+l-1}{q+1}$ and $\sum_{k,i}ir_{k,i}=s+\nu(c_md_l)$. In particular, there exists $(k_0,i_0)\neq (0,0)$ s.t. $r_{k_0,i_0}\geq q^{\left\lfloor \log_q(l)\right\rfloor}$.\vskip8pt

Case 1)\;\; $i_o-2\left\lfloor \frac{k_0-i_0}{q-1}\right\rfloor>0$. Then

\[\begin{aligned}
\text{ord}_{E^{q+1}-1} & \geq \sum r_{k,i}R\left(i-2\left\lfloor \frac{k-i}{q-1}\right\rfloor\right) &\\
\\
 & \geq R\left(1+\sum ir_{k,i}-2\sum r_{k,i} \left\lfloor \frac{k-i}{q-1}\right\rfloor\right) &\text{ by lemma }\ref{lem3}(3) \\
\\
 & \geq R\left(1+s+\nu(c_md_l)-2\left\lfloor \frac{N-\frac{m+l-1}{q+1}-s-\nu(c_md_l)}{q-1}\right\rfloor\right) & \\
\\
& \geq \begin{cases}
R\left(s-2\left\lfloor \frac{N-s}{q-1}\right\rfloor\right), & \text{ if }m+l=q+2\\
\\
R\left(1+s-2\left\lfloor \frac{N-s}{q-1}\right\rfloor\right), & \text{ if }m+l>q+2\\
\end{cases} \;.& \\
\end{aligned}\]
\vskip8pt

And $b_{k_0,i_0}$ is divisible by $E^{q+1}-1$ implies the same holds for $\underline{b,i}^{\underline{r,i}}$.\vskip8pt

Case 2)\;\; $i_o-2\left\lfloor \frac{k_0-i_0}{q-1}\right\rfloor=0$. Then $b_{k_0,i_0}$ is divisible by $E^{q+1}-1$ and so is $\underline{b,i}^{\underline{r,i}}$.

\[\begin{aligned}
\text{ord}_{E^{q+1}-1} & \geq r_{k_0,i_0}+\sum r_{k,i}R\left(i-2\left\lfloor \frac{k-i}{q-1}\right\rfloor\right) \\
 & \geq R\left(r_{k_0,i_0}+s+\nu(c_md_l)-2\left\lfloor \frac{N-\frac{m+l-1}{q+1}-s-\nu(c_md_l)}{q-1}\right\rfloor\right)  \\
& \geq \begin{cases}
R\left(r_{k_0,i_0}-1+s-2\left\lfloor \frac{N-s}{q-1}\right\rfloor\right), & \text{ if }m+l=q+2\\
R\left(r_{k_0,i_0}+s-2\left\lfloor \frac{N-s}{q-1}\right\rfloor\right), &  \text{ if }m+l>q+2\;.\\
\end{cases} \\
\end{aligned}\]
\vskip8pt

Case 3)\;\; $i_o-2\left\lfloor \frac{k_0-i_0}{q-1}\right\rfloor \leq -1 $.
Then

\[\begin{aligned}
\sum_{(k,i)\neq (k_0,i_0)}r_{k,i}(i-2\left\lfloor \frac{k-i}{q-1}\right\rfloor) &\geq r_{k_o,i_0}+\sum r_{k,i}(i-2\left\lfloor \frac{k-i}{q-1}\right\rfloor)\\
& \geq r_{k_o,i_0}+s+\nu(c_md_l)-2\left\lfloor \frac{N-\frac{m+l-1}{q+1}-s-\nu(c_md_l)}{q-1}\right\rfloor\\
& \geq \begin{cases}
r_{k_0,i_0}-1+s-2\left\lfloor \frac{N-s}{q-1}\right\rfloor, & \text{ if }m+l=q+2\\
r_{k_0,i_0}+s-2\left\lfloor \frac{N-s}{q-1}\right\rfloor, &  \text{ if }m+l>q+2\;.\\
\end{cases} \\
\end{aligned}\]
\vskip8pt

So

\[\begin{aligned}
\text{ord}_{E^{q+1}-1} & \geq \sum_{(k,i)\neq (k_0,i_0)} r_{k,i}R\left(i-2\left\lfloor \frac{k-i}{q-1}\right\rfloor\right) \\
 & \geq R\left(s-2\left\lfloor \frac{N-s}{q-1}\right\rfloor\right)\;.\\
\end{aligned}\]
\vskip8pt

Suppose $s-2\left\lfloor \frac{N-s}{q-1}\right\rfloor=0$, then $\sum_{(k,i)\neq (k_0,i_0)}r_{k,i}(i-2\left\lfloor \frac{k-i}{q-1}\right\rfloor) >0$. Hence there is $k\geq i$ with $r_{k,i}>0 $ and $i-2\left\lfloor \frac{k-i}{q-1}\right\rfloor>0$. Hence $E^{q+1}|\underline{b,i}^{\underline{r,i}}$.\vskip8pt

The argument is similar for the respective terms with $l<m$. This completes the proof of the proposition. \end{proof}

\vskip8pt

\subsection{Radius of convergence for the group action on $\Delta_0$}

As a consequence of \ref{prop1} we obtain the following

\begin{thm}\label{main1}
\begin{enumerate}
	\item Suppose $|x-1|\leq |\pi|$. Then $\left|\pi^{-n}Q_n\right|\leq |\pi|^{\frac{-2}{q}n}$. Hence $|\alpha-1|\leq |\pi|$ implies that $\left|b_{n}(\frac{\bar{\alpha}}{\alpha})\right|\leq |\pi|^{\frac{-2}{q}n}$.
	
	\vskip8pt
	
	\item Suppose $|\pi|^{\frac{q}{q+1}}\leq r<1$ and $|x-1|\leq r$. Then $|Q_n(x)|\leq r^{\frac{n(q-1)}{q}}$. In particular, the action of
	
$$\left\{\left(\begin{array}{cc}
	\alpha & 0\\
	0 & \bar{\alpha}\\
	\end{array}\right) \; \Big| \; \alpha \in 1+\pi \fro_{K_2}^* \right\} \sub \froDx = G$$
	
	\vskip8pt
	
on $\Delta_0 = \{|u|\leq |\pi|^{\frac{1}{q+1}}\}$ extends to a rigid-analytic action of the rigid-analytic group $\bbT_0^\circ$, cf. \ref{non_split_torus}, on $\Delta_0$.
\end{enumerate}
\end{thm}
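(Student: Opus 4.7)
The plan is to exploit the decomposition $\pi^{-n}Q_n(x)=\sum_{k=0}^n \pi^{-k}Q_{n,k}(x)$ from section \ref{rational_functions} and estimate each summand uniformly on $|x-1|\leq r<1$. Abbreviating $\mu_{n,k}:=k-2\lfloor(n-k)/(q-1)\rfloor$, Proposition \ref{prop1}(1) together with the fact that $(x-1)$ is monic in $\fro_K[x]$ shows that, for $\mu_{n,k}\geq 0$, the polynomial $Q_{n,k}$ is divisible by $(x-1)^{R(\mu_{n,k})}$ inside $\fro_K[x]$; the quotient is integral, so $|Q_{n,k}(x)|\leq r^{R(\mu_{n,k})}$. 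For $\mu_{n,k}<0$ only the trivial bound $|Q_{n,k}|\leq 1$ is available, but then the defining inequality $\lfloor(n-k)/(q-1)\rfloor>k/2$ forces $n\geq k(q+1)/2+(q-1)/2$, i.e., $k\leq 2n/q-(q-1)/q$, which will compensate.

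For part (1) I would take $r=|\pi|$ and, in the range $\mu_{n,k}\geq 0$, combine $R(m)\geq (q-1)m/q$ from Lemma \ref{lem3}(4) with $\lfloor(n-k)/(q-1)\rfloor\leq (n-k)/(q-1)$ to get
\[
|\pi^{-k}Q_{n,k}(x)|\leq |\pi|^{-k+R(\mu_{n,k})}\leq |\pi|^{-k+((q+1)k-2n)/q}=|\pi|^{k/q-2n/q}\leq |\pi|^{-2n/q}\;.
\]
In the range $\mu_{n,k}<0$, the arithmetic bound on $k$ just established gives $|\pi|^{-k}\leq |\pi|^{-2n/q}$ directly. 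Maximizing over $k$ proves part (1), and noting that $|\alpha-1|\leq |\pi|$ implies $|E^{q+1}-1|\leq |\pi|$ yields the claimed bound on $b_n(\bar\alpha/\alpha)$.

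Part (2) is the same strategy applied to a general $r\in[|\pi|^{q/(q+1)},1)$. Summand by summand in $|Q_n|\leq |\pi|^n\max_k |\pi|^{-k}|Q_{n,k}|$, the target inequality $|\pi|^{n-k}\leq r^{n(q-1)/q-R(\mu_{n,k})}$ is trivial when the right-hand exponent is nonpositive; otherwise, the hypothesis $r\geq |\pi|^{q/(q+1)}$ (equivalently $|\pi|\leq r^{(q+1)/q}$) is exactly what reduces it to $R(\mu_{n,k})\geq((q+1)k-2n)/q$, again a consequence of Lemma \ref{lem3}(4), and the degenerate range $\mu_{n,k}<0$ is handled by the constraint $2n\geq (q+1)k$ which follows from the same arithmetic as above. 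To conclude that the non-split torus action extends rigid-analytically, I would set $r:=\max(|E^{q+1}-1|,|\pi|^{q/(q+1)})<1$ on any affinoid subdomain of $\bbT_0^\circ$ and apply part (2); together with $|u|^{1+n(q+1)}\leq |\pi|^n|u|$ on $\Delta_0$, the estimate $|b_n(E)u^{1+n(q+1)}|\leq |u|\cdot r^{n(q-1)/q}$ gives the required uniform convergence of the defining power series on $\bbT_0^\circ\times \Delta_0$.

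The main difficulty I anticipate is managing the degenerate range $\mu_{n,k}<0$ consistently in both parts, where Proposition \ref{prop1} supplies only the vacuous bound and one must instead invoke the separate arithmetic consequence of $\lfloor(n-k)/(q-1)\rfloor>k/2$; the rest is tight bookkeeping in which the exponent $q/(q+1)$ in the hypothesis on $r$ matches the radius of $\Delta_0$ exactly, so there is no slack to absorb a weaker estimate at any stage.
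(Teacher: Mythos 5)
Your proposal is correct and follows essentially the same route as the paper: decompose $\pi^{-n}Q_n=\sum_k\pi^{-k}Q_{n,k}$, bound each summand via Proposition \ref{prop1}(1) and the inequality $R(m)\geq\frac{q-1}{q}m$ of Lemma \ref{lem3}(4), and observe that the hypothesis $r\geq|\pi|^{q/(q+1)}$ makes the exponents cancel exactly. The only (harmless) divergence is your separate arithmetic treatment of the range $\mu_{n,k}<0$; the paper absorbs this case uniformly because $R(m)=0\geq\frac{q-1}{q}m$ also holds for $m<0$, so no case split is needed.
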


\begin{proof}
Write $\pi^{-n}Q_n=\sum \pi^{-s}Q_{n,s}=\pi^{-n}\sum\pi^{n-s}Q_{n,s}$.

\vskip8pt

(1) Suppose $|x-1|\leq |\pi|$, then

\[\begin{aligned}
|\pi^{n-s}Q_{n,s}(x)| &\leq |\pi|^{(n-s)+R\left(s-2\left\lfloor \frac{n-s}{q-1}\right\rfloor\right)}\\
 & \leq |\pi|^{(n-s)+\frac{q-1}{q}\left(s-2 \frac{n-s}{q-1}\right)}\\
 & = |\pi|^{\frac{(q-2)n+2s}{q}}\\
 & \leq |\pi|^{\frac{q-2}{q}n}\;. \\
\end{aligned}\]

\vskip8pt

Hence $|\pi^{-n}Q_n|\leq |\pi|^{\frac{-2}{q}n}$.

\vskip8pt

(2) Suppose $|x-1|\leq r$ where $ |\pi|^{\frac{q}{q+1}}\leq r<1$. Then

\[\begin{aligned}
|\pi^{n-s}Q_{n,s}(x)| &\leq r^{\frac{q+1}{q}(n-s)+R\left(s-2\left\lfloor \frac{n-s}{q-1}\right\rfloor\right)}\\
 & \leq r^{\frac{q+1}{q}(n-s)+\frac{q-1}{q}\left(s-2\frac{n-s}{q-1}\right)}\\
 & = r^{\frac{(q-1)n}{q}}\;. \\
\end{aligned}\]
\vskip8pt

In particular, when $|\alpha-1|\leq r$ and $|u|\leq |\pi|^{\frac{1}{q+1}}$,

\[|b_n(\alpha)u^{1+n(q+1)}|\leq r^{\frac{q-1}{q}n}|\pi|\rightarrow 0\]
\vskip8pt

as $n\rightarrow \infty$. \end{proof}

%%%%%%%%%%%%%%%%%%%%%%%%%%%%%%%%%%%%%%%%%%%%%%%%%%%%%%%%%%%

\section{Analyticity on critical discs of larger radius}\label{larger_radii}

%%%%%%%%%%%%%%%%%%%%%%%%%%%%%%%%%%%%%%%%%%%%%%%%%%%%%%%%%%%
In this section, put  $r_s = |\pi|^{\frac{1}{(1+q)q^s}}$ for $s \in \Z_{\ge 0}$, which we sometimes call a {\it critical radius}. The function $\phi_1$ vanishes at $u=0$, and all its other zeros are located on the annuli

$$\cA_s = \{u \in X \; | \; |u| = r_s \;\} \;, $$
\vskip8pt

for {\it odd} $s = 1,3,5, \ldots \,$, and $\phi_1(u)$ has precisely $(1+q)q^s$ zeros on $\cA_s$ for odd $s$. The zeros of $\phi_0(u)$ are located on the annuli $\cA_s$ for {\it even} $s = 0,2, 4, \ldots \,$, and $\phi_0(u)$ has precisely $(1+q)q^s$ zeros on $\cA_s$ for even $s$. Let

$$\Delta_s = \{u \in X \; | \; |u| \le r_s \;\} \;, $$
\vskip8pt

be the disc of critical radius $r_s$ centered at zero.

\subsection{Estimates for the action of the Lie algebra}

\begin{lem}\label{supremums}
\begin{enumerate}
\item Let $s \ge 0$ be even. Then

$$||\phi_0||_{\Delta_s} = |\pi|^{- \frac{s}{2}+\frac{q^s-1}{(q^2-1)q^s}} \;, \hskip10pt ||\phi_1||_{\Delta_s} = |\pi|^{- \frac{s}{2}+\frac{q^{s+1}-1}{(q^2-1)q^s}} \;.$$
\vskip8pt

\item Let $s \ge 1$ be odd. Then

$$||\phi_0||_{\Delta_s} = |\pi|^{- \frac{s+1}{2}+\frac{q^{s+1}-1}{(q^2-1)q^s}} \;, \hskip10pt ||\phi_1||_{\Delta_s} = |\pi|^{- \frac{s-1}{2}+\frac{q^{s}-1}{(q^2-1)q^s}} \;.$$
\vskip8pt

\item For all $s \ge 0$ one has

$$||\phi_0\phi_1||_{\Delta_s} = |\pi|^{-s+\frac{1}{q-1} - \frac{2}{(q^2-1)q^s}} \;.$$
\vskip8pt

\end{enumerate}
\end{lem}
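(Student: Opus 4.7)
The plan is to reduce each norm to an optimization over base-$q$ expansions and then solve it. Since the closed disc $\Delta_s = \{|u| \le r_s\}$ is affinoid over the complete non-archimedean field $\fronr$, for any power series $f = \sum_n e_n u^n$ convergent on the open unit disc (hence on $\Delta_s$) the supremum norm equals the Gauss-type norm $\max_n |e_n|\, r_s^n$. In particular,
\[
\|\phi_1\|_{\Delta_s} = \max_n |d_n|\, r_s^n, \qquad \|\phi_0\|_{\Delta_s} = \max_n |c_n|\, r_s^n.
\]

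I would organize the maxima by the integer $k$ appearing in the decomposition $n = q^{2a_0} + q^{2a_1+1} + \cdots + q^{2a_k+k}$, so that $|d_n| = |\pi|^{-k/2}$ (for even $k$) and $|c_n| = |\pi|^{-(k+1)/2}$ (for odd $k$). Since the exponents $2a_i + i$ strictly increase with $i$, the smallest admissible $n$ for a given $k$ is $T_k = 1 + q + \cdots + q^k = (q^{k+1}-1)/(q-1)$, attained at $a_0 = \cdots = a_k = 0$; and as $r_s < 1$, for fixed $k$ the maximum of $r_s^n$ over admissible $n$ is attained at $n = T_k$. Therefore
\[
\|\phi_1\|_{\Delta_s} = \max_{k\ \text{even}} |\pi|^{-k/2 + T_k/((q+1)q^s)}, \qquad \|\phi_0\|_{\Delta_s} = \max_{k\ \text{odd}} |\pi|^{-(k+1)/2 + T_k/((q+1)q^s)},
\]
where the contribution of $c_0 = 1$ to $\|\phi_0\|_{\Delta_s}$ is subsumed by the $k$-maximum for every $s \ge 1$, and is matched by $k=1$ when $s=0$.

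The main mechanical step is to locate the optimal $k$. Set $f(k) = -k/2 + T_k/((q+1)q^s)$. Using $T_{k+2} - T_k = q^{k+1}(q+1)$ one finds
\[
f(k+2) - f(k) = q^{k+1-s} - 1,
\]
which is negative for $k \le s-2$, vanishes exactly at $k = s-1$, and is positive for $k \ge s$. Stepping through this by parity: if $s$ is even, $f$ attains its minimum on the even integers at $k = s$; if $s$ is odd, the minimum is attained jointly at $k = s-1$ and $k = s+1$. The analogous recursion holds for $g(k) = -(k+1)/2 + T_k/((q+1)q^s)$ with the same zero at $k = s-1$, which yields the minimum on odd $k$ at $k = s-1$ (for $s$ even) and at $k = s$ (for $s$ odd). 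Substituting these optimal values and simplifying $T_{k^\ast}/((q+1)q^s) = (q^{k^\ast+1}-1)/((q^2-1)q^s)$ produces the formulas in parts (1) and (2).

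Part (3) follows at once from multiplicativity of the supremum norm on the affinoid $\Delta_s$: adding the two exponents in either parity case gives
\[
\|\phi_0\phi_1\|_{\Delta_s} \;=\; |\pi|^{-s + (q^s + q^{s+1} - 2)/((q^2-1)q^s)} \;=\; |\pi|^{-s + \tfrac{1}{q-1} - \tfrac{2}{(q^2-1)q^s}},
\]
which is the asserted expression. The argument is essentially bookkeeping; the only real content is the identity $f(k+2) - f(k) = q^{k+1-s} - 1$, which pins down the critical $k$ in each parity regime.
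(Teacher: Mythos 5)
Your proof is correct and follows essentially the same route as the paper: both reduce the supremum norm on $\Delta_s$ to the Gauss norm and identify the dominant monomial(s), which the paper simply asserts as "not hard to check." Your difference identity $f(k+2)-f(k)=q^{k+1-s}-1$ is a clean way of making that dominance verification explicit, and your optimal indices $k$ match the dominating terms named in the paper's proof.
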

\begin{proof}
(1) It is not hard to check that $\pi^{-\frac{s}{2}}u^{1+q+q^2+\dots+q^{s-1}}$ and $\pi^{-\frac{s+2}{2}}u^{1+q+q^2+\dots+q^{s+1}}$ are the dominating terms of $\phi_0$ when $|u|=r_s$ for $s>0$. $1$ and $\pi^{-1} u^{1+q}$ are the dominating terms of $\phi_0$ when $s=0$. Therefore,

\[||\phi_0||_{\Delta_s}=|\pi|^{-\frac{s}{2}}r_s^{\frac{q^{s}-1}{q-1}}=|\pi|^{- \frac{s}{2}+\frac{q^s-1}{(q^2-1)q^s}}\;.\]
\vskip8pt

Similarly, $\pi^{-\frac{s}{2}}u^{1+q+q^2+\dots+q^{s}}$ is the dominating term of $\phi_1$. Hence

\[||\phi_1||_{\Delta_s}=|\pi|^{-\frac{s}{2}}r_s^{\frac{q^{s+1}-1}{q-1}}=|\pi|^{- \frac{s}{2}+\frac{q^{s+1}-1}{(q^2-1)q^s}}\;.\]
\vskip8pt

The treatment for part (2) is the same and part (3) follows from (1), (2). \end{proof}

\vskip8pt

\begin{para}
{\it The action of the Lie algebra.} Let $\zeta\in \fro_{K_2}^*$ be such that $\bar{\zeta}=-\zeta$. Let $\bbG$ be the group scheme over $\fro = \fro_K$ whose $\fro$-valued points are $\froDx$, cf. section \ref{intro}. Denote by $\frg$ the relative Lie algebra of $\bbG$ over $\fro$. Consider the following $\fro$-basis of $\frg$:

$$\frx_1 = \left(\begin{array}{cc} 1 & 0 \\
0 &  1 \end{array}\right) \;, \;\; \frx_2 = \left(\begin{array}{cc} \zeta & 0 \\
0 & -\zeta \end{array}\right) \;, \;\; \fry_1 = \left(\begin{array}{cc} 0 & \pi \\
1 & 0 \end{array}\right) \;, \;\; \fry_2 = \left(\begin{array}{cc}  0 & -\pi\zeta \\
 \zeta & 0 \end{array}\right) \;.$$

\vskip8pt

We note that

$$[\frx_2,\fry_1] = -2\fry_2 \;,\;\; [\frx_2,\fry_2] = -2\zeta^2\fry_1 \;, \;\; [\fry_1,\fry_2] = 2\pi\frx_2 \;.$$

\vskip8pt

We write elements of $\bbP^1$ as $[x_0:x_1]$ and put $w = \frac{x_1}{x_0}$. Elements $\frz$ in the Lie algebra $\frg$ act on rational functions $f = f(w)$ on $\bbP^1$ as follows:

$$(\frz.f)(w) = \frac{d}{dt} f(e^{t\frz}.w)\Big|_{t=0} \;.$$

\vskip8pt

Note that

$$\exp(t\fry_1) = \left(\begin{array}{cc} \cosh(t\sqrt{\pi}) & \sqrt{\pi} \sinh(t\sqrt{\pi}) \\
\sinh(t\sqrt{\pi})/\sqrt{\pi} &  \cosh(t\sqrt{\pi}) \end{array}\right) \;, $$

\vskip8pt

and

$$\exp(t\fry_2) = \left(\begin{array}{cc} \cos(t\zeta\sqrt{\pi}) & -\sqrt{\pi}\sin(t\zeta\sqrt{\pi})\\
\sin(t\zeta\sqrt{\pi})/\sqrt{\pi} &  \cos(t\zeta\sqrt{\pi}) \end{array}\right) \;. $$

\vskip8pt

According to the formula \ref{action} for the group action we compute

$$\begin{array}{rcl}(\frx_2.f)(w) & = & \frac{d}{dt} f(e^{-2t\zeta} w)\Big|_{t=0}  = -2\zeta wf'(w) \;,\\
&&\\
(\fry_1.f)(w) & = & \frac{d}{dt} f\left(\frac{\cosh(t\sqrt{\pi})w+\sqrt{\pi}\sinh(t\sqrt{\pi})}{\sinh(t\sqrt{\pi})/\sqrt{\pi} \cdot w + \cosh(t\sqrt{\pi})}\right)\Big|_{t=0} \\
&&\\
&=& \frac{\sinh(t\sqrt{\pi})\sqrt{\pi}w+\pi\cosh(t\sqrt{\pi})- w(\cosh(t\sqrt{\pi})w + \sqrt{\pi}\sinh(t\sqrt{\pi}))}{(\sinh(t\sqrt{\pi})/\sqrt{\pi} \cdot w + \cosh(t\sqrt{\pi}))^2}\Big|_{t=0} \cdot f'(w)\\
&&\\
&=& (\pi-w^2)f'(w) \;,\\
&&\\
(\fry_2.f)(w) & = & \frac{d}{dt} f\left(\frac{\cos(t\zeta\sqrt{\pi})w-\sqrt{\pi}\sin(t\zeta\sqrt{\pi})}{\sin(t\zeta\sqrt{\pi})/\sqrt{\pi} \cdot w + \cos(t\zeta\sqrt{\pi})}\right)\Big|_{t=0} \\
&&\\
&=& \frac{-\sin(t\zeta\sqrt{\pi})\zeta\sqrt{\pi}w-\zeta\pi\cos(t\zeta\sqrt{\pi})- w(\zeta\cos(t\sqrt{\pi})w + -\zeta\sqrt{\pi}\sin(t\zeta\sqrt{\pi}))}{(\sin(t\zeta\sqrt{\pi})/\sqrt{\pi} \cdot w + \cos(t\zeta\sqrt{\pi}))^2}\Big|_{t=0} \cdot f'(w)\\
&&\\
&=& -\zeta(\pi+w^2)f'(w) \;.
\end{array}$$

\vskip8pt

Therefore, under the map from $\frg_\fro$ to the ring of differential operators on $P$ we have:

\begin{numequation}\label{diff_action}\frx_2 \mapsto -2\zeta w\partial_w \;, \hskip10pt \fry_1 \mapsto (\pi-w^2)\partial_w \;, \hskip10pt \fry_2 \mapsto -\zeta (\pi+w^2)\partial_w \;.
\end{numequation}

Setting $w = \frac{\phi_1(u)}{\phi_0(u)}$, we find

$$dw = \frac{\phi_1'\phi_0-\phi_1\phi_0'}{\phi_0^2}du = \frac{\vep}{\phi_0^2}du \;,$$

\vskip8pt

where $\vep = \phi_1'\phi_0-\phi_1\phi_0'$. As $\partial_w$ is dual to $dw$ we get thus

$$1 = \langle \partial_w, dw \rangle = \frac{\vep}{\phi_0^2}\langle \partial_w, du \rangle \;$$

\vskip8pt

and hence $\partial_w = \frac{\phi_0^2}{\vep} \partial_u$. From \ref{diff_action} we then deduce

\begin{numequation}\label{diff_action_u}\frx_2 \mapsto -2\zeta \frac{\phi_0\phi_1}{\vep}\partial_u \;, \hskip10pt \fry_1 \mapsto \frac{\pi\phi_0^2-\phi_1^2}{\vep}\partial_u \;, \hskip10pt \fry_2 \mapsto -\zeta \frac{\pi\phi_0^2+\phi_1^2}{\vep}\partial_u \;.
\end{numequation}

Let now $\pi^{\frac{1}{(q+1)q^s}}$ be any element of absolute value equal to $|\pi|^{\frac{1}{(q+1)q^s}}$, and put $u_s = \pi^{-\frac{1}{(q+1)q^s}}u$, which is a coordinate function on $\Delta_s$ with supremum norm $1$. Then $\partial_u = \pi^{-\frac{1}{(q+1)q^s}}\partial_{u_s}$ and the formulas in \ref{diff_action_u} become

\begin{numequation}\label{diff_action_u_s}\begin{array}{rcl}\frx_2 & \mapsto & -2\zeta \frac{\phi_0\phi_1}{\vep}\pi^{-\frac{1}{(q+1)q^s}}\partial_{u_s}\;, \\
&&\\
\fry_1 & \mapsto & \frac{\pi\phi_0^2-\phi_1^2}{\vep}\pi^{-\frac{1}{(q+1)q^s}}\partial_{u_s} \;, \\
&&\\
\fry_2 & \mapsto & -\zeta \frac{\pi\phi_0^2+\phi_1^2}{\vep}\pi^{-\frac{1}{(q+1)q^s}}\partial_{u_s} \;.
\end{array}
\end{numequation}

\end{para}

\begin{prop}\label{operator_estimates}
\begin{enumerate}
\item Let $s \ge 0$ be even. Then

$$||\pi^{s+1}\phi_0^2\pi^{-\frac{1}{(q+1)q^s}}||_{\Delta_s} = |\pi|^{1+\frac{2q^s-2}{(q^2-1)q^s}-\frac{1}{(q+1)q^s}} = |\pi|^{1+\frac{2}{q^2-1}-\frac{1}{(q-1)q^s}} = |\pi|^{\frac{q^2+1}{q^2-1}-\frac{1}{(q-1)q^s}}\;,$$

\vskip8pt

and

$$||\pi^s\phi_1^2\pi^{-\frac{1}{(q+1)q^s}}||_{\Delta_s} = |\pi|^{\frac{2q^{s+1}-2}{(q^2-1)q^s}-\frac{1}{(q+1)q^s}} = |\pi|^{\frac{2q}{q^2-1}-\frac{1}{(q-1)q^s}}\;.$$

\vskip8pt

In particular, when $s=0$:

$$||\pi^{s+1}\phi_0^2\pi^{-\frac{1}{(q+1)q^s}}||_{\Delta_0} =  |\pi|^{\frac{q}{q+1}} \;, \hskip20pt ||\pi^s\phi_1^2\pi^{-\frac{1}{(q+1)q^s}}||_{\Delta_0} = |\pi|^{\frac{1}{q+1}}\;.$$

\vskip8pt

\item Let $s \ge 1$ be odd. Then

$$||\pi^{s+1}\phi_0^2\pi^{-\frac{1}{(q+1)q^s}}||_{\Delta_s} = |\pi|^{\frac{2q^{s+1}-2}{(q^2-1)q^s}-\frac{1}{(q+1)q^s}} = |\pi|^{\frac{2q}{q^2-1}-\frac{1}{(q-1)q^s}} \;, $$

\vskip8pt

and

$$||\pi^s\phi_1^2\pi^{-\frac{1}{(q+1)q^s}}||_{\Delta_s} = |\pi|^{1+\frac{2q^{s}-2}{(q^2-1)q^s}-\frac{1}{(q+1)q^s}} =  |\pi|^{1+\frac{2}{q^2-1}-\frac{1}{(q-1)q^s}} = |\pi|^{\frac{q^2+1}{q^2-1}-\frac{1}{(q-1)q^s}} \;.$$

\vskip8pt

\item For all $s \ge 0$ one has

$$||\pi^s\phi_0\phi_1\pi^{-\frac{1}{(q+1)q^s}}||_{\Delta_s} = |\pi|^{\frac{1}{q-1} - \frac{2}{(q^2-1)q^s}-\frac{1}{(q+1)q^s}} = |\pi|^{\frac{1}{q-1}-\frac{1}{(q-1)q^s}} \;.$$

\vskip8pt
\end{enumerate}
\end{prop}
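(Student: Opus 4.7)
The plan is to derive all three parts of the proposition by direct computation from Lemma \ref{supremums}, using the multiplicativity of the supremum norm on the affinoid disc $\Delta_s$. The supremum norm on $\Delta_s$ coincides with the Gauss norm on the Tate algebra of functions analytic on $\Delta_s$, and is multiplicative since the corresponding algebra is an integral domain. Hence $\|\phi_i^2\|_{\Delta_s} = \|\phi_i\|_{\Delta_s}^2$ for $i=0,1$, and the equality $\|\phi_0\phi_1\|_{\Delta_s} = \|\phi_0\|_{\Delta_s}\cdot\|\phi_1\|_{\Delta_s}$ (implicit in Lemma \ref{supremums}(3)) serves as a sanity check on the exponents.

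For part (1), with $s \ge 0$ even, I would square the first estimate in Lemma \ref{supremums}(1) to obtain $\|\phi_0^2\|_{\Delta_s} = |\pi|^{-s + \frac{2(q^s-1)}{(q^2-1)q^s}}$, multiply by the scaling factor $|\pi|^{(s+1)-\frac{1}{(q+1)q^s}}$, and simplify using
\[
  \frac{2(q^s-1)}{(q^2-1)q^s} - \frac{1}{(q+1)q^s} \;=\; \frac{2q^s - (q+1)}{(q^2-1)q^s} \;=\; \frac{2}{q^2-1} - \frac{1}{(q-1)q^s},
\]
together with $1 + \frac{2}{q^2-1} = \frac{q^2+1}{q^2-1}$, to arrive at the first formula in its closed form. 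The identical manipulation for $\phi_1$ (squaring the second estimate in Lemma \ref{supremums}(1) and multiplying by $|\pi|^{s-\frac{1}{(q+1)q^s}}$) produces the second formula, and the $s=0$ specialization is immediate.

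Part (2) is entirely parallel, invoking the odd-$s$ estimates of Lemma \ref{supremums}(2). The roles of $\phi_0$ and $\phi_1$ effectively swap because the dominant monomials shift by one power of $q$ between the even and odd cases, which produces the asserted symmetry in the final estimates. For part (3) one multiplies $\|\phi_0\phi_1\|_{\Delta_s}$ from Lemma \ref{supremums}(3) by $|\pi|^{s-\frac{1}{(q+1)q^s}}$ and uses
\[
  \frac{1}{q-1} - \frac{2}{(q^2-1)q^s} - \frac{1}{(q+1)q^s} \;=\; \frac{1}{q-1} - \frac{q+1}{(q^2-1)q^s} \;=\; \frac{1}{q-1} - \frac{1}{(q-1)q^s}.
\]
There is no substantial obstacle: once multiplicativity of the supremum norm on the closed disc $\Delta_s$ is acknowledged, the whole argument reduces to arithmetic with exponents, and the only step requiring care is the translation between the two equivalent forms of each final exponent so that the formulas match those stated.
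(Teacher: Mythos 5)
Your derivation is correct and is exactly what the paper intends: Proposition \ref{operator_estimates} is stated without proof immediately after Lemma \ref{supremums}, being an immediate consequence of that lemma via multiplicativity of the Gauss/supremum norm on the affinoid disc $\Delta_s$ and the exponent arithmetic you carry out. All the simplifications check out (including the $s=0$ specialization and the swap of roles of $\phi_0,\phi_1$ between even and odd $s$).
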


\subsection{Groups acting analytically on larger critical discs}

As an immediate consequence of the previous proposition \ref{operator_estimates} we obtain the following result.

\begin{thm}\label{main2} Let $K = \Qp$ (hence $q=p$ and $\pi = p$), $K_s = \Qp(p^{\frac{1}{(p^1-1)p^s}})$, and put

$$\frh_s = \fro \cdot p^s \frx_1 \oplus \fro \cdot p^{s+ \frac{1}{(p-1)p^s}} \frx_2 \oplus \fro \cdot  p^{s-\frac{1}{p+1}+\frac{1}{(p-1)p^s}}\fry_1 \oplus \fro \cdot
p^{s-\frac{1}{p+1}+\frac{1}{(p-1)p^s}}\fry_2 \;. $$

\vskip8pt

This is a Lie algebra over the ring of integers $\fro_{K_s}$ in $K_s$. There is a group scheme $\bbH_s$ over $\fro_{K_s}$ with Lie algebra $\frh_s$. Denote by $\widehat{\bbH}_s^\circ$ the completion of this group scheme along the unit section, and let $\bbH_s^\circ$ be the associated rigid-analytic group. Then $\bbH_s^\circ$ acts analytically on $\Delta_s$.
\end{thm}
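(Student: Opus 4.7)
The plan is to deduce the theorem essentially formally from Proposition \ref{operator_estimates}, treating the statement as a consequence of operator-norm bookkeeping. First I would verify that $\frh_s$ is a Lie $\fro_{K_s}$-subalgebra of $\frg \otimes_\fro \fro_{K_s}$. Using the relations $[\frx_2,\fry_1] = -2\fry_2$, $[\frx_2,\fry_2] = -2\zeta^2\fry_1$, $[\fry_1,\fry_2] = 2\pi\frx_2$ (and centrality of $\frx_1$), this amounts to three arithmetic inequalities for the three exponents attached to the basis elements; each is immediate, e.g.\ for the bracket of $\fry_1$ with $\fry_2$ one needs $1 + 2\left(s - \tfrac{1}{p+1} + \tfrac{1}{(p-1)p^s}\right) \ge s + \tfrac{1}{(p-1)p^s}$, which clearly holds. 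This gives a well-defined smooth group scheme $\bbH_s$ over $\fro_{K_s}$ whose formal completion at the identity yields the wide-open rigid-analytic group $\bbH_s^\circ$.

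Next I would compute the operator norms of the four scaled basis elements of $\frh_s$ as derivations on $\cO(\Delta_s)$, using formulas (\ref{diff_action_u_s}) and the coordinate $u_s$ of supremum norm $1$. Using that $\vep = \phi_1'\phi_0 - \phi_1\phi_0'$ is a unit of supremum norm $1$ on $\Delta_s$ (a consequence of the non-degeneracy of the period morphism, which is étale on $X$), the norm of $p^{s+1/((p-1)p^s)}\frx_2$ acting via $-2\zeta\, p^{s+1/((p-1)p^s)}\frac{\phi_0\phi_1}{\vep}\pi^{-1/((p+1)p^s)}\partial_{u_s}$ simplifies, using part (3) of Proposition \ref{operator_estimates}, to
\[
|p|^{s + 1/((p-1)p^s)} \cdot |p|^{-s + 1/(p-1) - 1/((p-1)p^s)} \;=\; |p|^{1/(p-1)}.
\]
An analogous computation for $\fry_1, \fry_2$ uses parts (1)--(2) of Proposition \ref{operator_estimates}: in both the even and odd cases one picks out the dominant of $\pi\phi_0^2$ and $\phi_1^2$ and combines the scaling $p^{s - 1/(p+1) + 1/((p-1)p^s)}$ with the bound $|p|^{2p/(p^2-1) - 1/((p-1)p^s)}$, again collapsing to exactly $|p|^{1/(p-1)}$. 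Finally, $p^s\frx_1$, arising from the scalar subgroup, reduces to a multiple of $u_s\partial_{u_s}$ and poses no difficulty.

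With these norm bounds in hand, I would invoke the general framework for wide-open analytic actions (compare \cite{Emerton} and \cite{KohlhaaseDef}). Since $|p|^{1/(p-1)}$ is the radius of convergence of the $p$-adic exponential, for every $A \in \frh_s$ and every $t$ with $|t|<1$ the series $\exp(tA) = \sum_n (tA)^n/n!$ converges as a continuous endomorphism of $\cO(\Delta_s)$, and the Baker--Campbell--Hausdorff formula assembles these into a rigid-analytic action of $\bbH_s^\circ$ on $\Delta_s$, compatible with the given $\froDx$-action on the $K_s$-valued points of $\bbH_s^\circ$ that lie in $\froDx$.

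The main obstacle is entirely located in Proposition \ref{operator_estimates}; the present theorem is essentially a choice of scaling factors that exactly absorb the supremum norms of $\phi_0^2, \phi_1^2, \phi_0\phi_1$ on $\Delta_s$, producing the uniform bound $|p|^{1/(p-1)}$. A secondary delicate point is the claim that $\vep$ is a unit of norm $1$ on $\Delta_s$; if this fails, the scaling factors in the definition of $\frh_s$ would need to be adjusted by the corresponding factor, but the structure of the argument is unchanged.
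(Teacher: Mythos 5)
Your proposal follows exactly the route the paper intends: the theorem is stated there as an "immediate consequence" of Proposition \ref{operator_estimates}, and your computation confirming that the chosen scaling factors collapse each operator norm to exactly $|p|^{1/(p-1)}$, followed by exponentiation over the wide open group, is precisely the omitted argument. Your flagged caveat about $\vep=\phi_1'\phi_0-\phi_1\phi_0'$ being a unit of supremum norm $1$ is also the right one to make explicit (it is a standard fact from Gross--Hopkins that the paper uses tacitly in deriving \ref{diff_action_u}), so the proposal is correct and matches the paper's approach.
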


\vskip8pt

\begin{rem}
Suppose $K = \Qp$ and consider the case $s=0$. Then theorem \ref{main1} (2) implies that, in the formula for $\frh_0$ above, we can replace $\fro \cdot p^{ \frac{1}{(p-1)}} \frx_2$ by $\fro \cdot \frx_2$. Hence, for $s=0$, we can replace the Lie algebra $\frh_0$ in the theorem above by

$$\frh_0' = \fro \cdot  \frx_1 \oplus \fro \cdot  \frx_2 \oplus \fro \cdot  p^{-\frac{1}{p+1}+\frac{1}{(p-1)}}\fry_1 \oplus \fro \cdot
p^{-\frac{1}{p+1}+\frac{1}{(p-1)}}\fry_2 \;. $$
\end{rem}

\vskip8pt

\begin{para}
Let $s\geq 0$. Suppose $u_0\in \Delta_s$ and let $B^-(u_0,r)$ be the largest wide open disc such that $\Phi$ is injective on $B^-(u_0,r)$. In \cite{Lo} we call $B^-(u_0,r)$ a disc of injectivity around $u_0$, and we have shown that $r = |\pi u_0^{-2}|^{\frac{1}{q-1}}$. In \cite[sec. 3]{Lo} we describe the image of $B^-(u_0,r_0)$ under $\Phi$ which is again a wide open disc and whose radius we determine.

\vskip8pt

Suppose $g=\left(
\begin{array}{cc}
	\alpha &  \pi\bar{\beta}\\
	\beta & \bar{\alpha} \\
\end{array}
\right)\in G$. Then $g\cdot \Phi(u_0)\in \Phi(B^-(u_0,r_0))$ for all $u_0\in \Delta_s$ if and only if $|\alpha-\bar{\alpha}|<|\pi|^{s}$ and $|\beta|<|\pi|^{s-\frac{1}{q+1}}$. Therefore, we would expect that we could actually replace the Lie algebra $\frh_s$ in theorem \ref{main2} by
the larger Lie algebra

$$\frg_s = \fro \cdot p^s \frx_1 \oplus \fro \cdot p^{s} \frx_2 \oplus \fro \cdot  p^{s-\frac{1}{p+1}}\fry_1 \oplus \fro \cdot p^{s-\frac{1}{p+1}}\fry_2$$

\vskip8pt

which differs from $\frh_s$ by the factor $p^{\frac{1}{(p-1)p^s}}$ in front of the generators $\frx_2$, $\fry_1$, and $\fry_2$.
\end{para}

%\nocite{*}

\bibliographystyle{plain}

\bibliography{ref}

\end{document}